\title[On equicontinuous factors of flows on locally path-connected spaces]{On equicontinuous 
factors of flows on locally path-connected compact spaces}
\DeclareMathOperator{\irr}{irr}
\newcommand{\mon}{\mathrm{mon}}
\newcommand{\eq}{\mathrm{eq}}
\newcommand{\noop}[1]{}
\begin{document}

\begin{abstract}
We consider a locally path-connected compact metric space $K$ with finite 
first Betti number $\ub_1(K)$ and a flow $(K, G)$ on $K$ such that $G$ is abelian and
all $G$-invariant functions $f\in\uC(K)$ are constant. We prove that every equicontinuous 
factor of the flow $(K, G)$ is isomorphic to a flow on a compact abelian Lie group of 
dimension less than $\nicefrac{\ub_1(K)}{\ub_0(K)}$.
For this purpose, we use and provide a new proof for \cite[Theorem 2.12]{HauserJaeger2017} which states
that for a flow on a locally connected 
compact space the quotient map onto the maximal equicontinuous factor is \emph{monotone}, i.e., 
has connected fibers. Our alternative proof is a simple consequence of a new characterization of the 
monotonicity of a quotient map $p\colon K\to L$ between locally connected compact spaces $K$ and $L$ 
that we obtain by characterizing the local connectedness of $K$ in terms of the Banach lattice 
$\uC(K)$.
\end{abstract}

\subjclass[2010]{54H20, 37B05}

\maketitle

The study of topological dynamical systems via their maximal equicontinuous 
factors plays an important role for, e.g., tiling dynamical systems (see \cite[Chapter 5]{MathOfAperiodicOrder}),
Toeplitz flows (see \cite{Downarowicz2005}),
or the Furstenberg structure theorem for minimal distal flows. One reason is that,
for group actions, the maximal 
equicontinuous factor coincides with the Kronecker 
factor. The latter is highly structured, is minimal if and 
only if it is isomorphic to a minimal rotation on a homogeneous space of some compact group,
and also captures spectral-theoretic information.
In light of this, it is important to understand how the specific structure and 
properties of a system can be used to determine its maximal equicontinuous factor.
For example, it is known that if $(M, G)$ is a distal minimal flow on a compact 
manifold $M$, then its maximal equicontinuous factor is a flow on a homogeneous space of some 
compact Lie group, see \cite[Theorem 1.2]{Rees1977} or \cite[Theorem 1.2]{Ihrig1984}. 
If, additionally, the acting group $G$ is abelian, the maximal equicontinuous factor 
is in fact isomorphic to a flow on a compact abelian Lie group.
For non-distal systems, however, few results in this spirit seem to exist. Notably, Hauser and J\"ager recently proved the following.

\begin{theorem*}[\mbox{\cite[Theorem 3.1]{HauserJaeger2017}}]
  Suppose that $f$ is a homeomorphism of the two-torus. If the maximal equicontinuous 
  factor of $(\T^2, f)$ is minimal, then it must be one of the following three:
  \begin{enumerate}[(i)]
    \item an irrational translation on the two-torus,
    \item an irrational rotation on the circle,
    \item the identity on a singleton.
  \end{enumerate}
\end{theorem*}

Thus, the geometric properties of the two-torus imply that the maximal 
equicontinuous factor of a flow on it must have a relatively simple structure 
if it is minimal: It is a rotation 
on a compact abelian Lie group of dimension less than two. As it turns out, this 
is representative of the following general phenomenon which is the main result 
of this article. Recall that every compact abelian 
Lie group is isomorphic to the product $F\times\T^m$ of a finite abelian group $F$ and a torus $\T^m$.

\begin{theorem*}
  Let $(K, G)$ be a flow such that 
  $K$ is locally path-connected with finite first Betti number $\ub_1(K)$, 
  $G$ is abelian, and such that $K$ is metrizable or $G$ is separable.
  If all $G$-invariant functions $f\in\uC(K)$ are constant, then every 
  equicontinuous factor of $(K, G)$ is isomorphic to a minimal flow on some compact abelian 
  Lie group of dimension less than $\nicefrac{\ub_1(K)}{\ub_0(K)}$. More precisely,
  for every 
  equicontinuous factor $(L, G)$ of $(K, G)$, there are
  a finite abelian group $F$ of order $|F| \leq \ub_0(K)$ and an
  $m \leq \nicefrac{\ub_1(K)}{\ub_0(K)}$ such 
  that $(L, G)$
  is isomorphic to a minimal action of $G$ on $F\times\T^m$ via rotations.
\end{theorem*}

This provides a bound on the complexity of the maximal equicontinuous factor 
in terms of topological invariants of the underlying space $K$ and applies in 
particular to minimal systems on compact manifolds.
As a corollary, we obtain in \cref{cor:liegroupquotient} and \cref{cor:toruscase} that the above-cited 
\cite[Theorem 3.1]{HauserJaeger2017} holds analogously for tori of arbitrary dimension
and, in an appropriate version, more generally for quotients $H/\Gamma$ of connected,
simply connected Lie groups $H$ by discrete, cocompact subgroups $\Gamma \subset H$. Examples 
for such spaces are in particular given by nilmanifolds, see \cite[Chapter 10]{HostKra2018}.

Another result of \cite{HauserJaeger2017} and a key element in their proof of 
\cite[Theorem 3.1]{HauserJaeger2017} is that for a large 
class of flows the factor map onto the maximal equicontinuous factor is \emph{monotone},
meaning that preimages of points are connected. 
Similar results were, it seems, first obtained in  \cite{McMahonWu1976} where the 
authors proved that for each extension $p\colon (K, G) \to (L, G)$ of minimal flows 
that decomposes into a tower of equicontinuous extensions, the quotient map $K \to K/\uS(p)$ 
is monotone, where $\uS(p)$ denotes the relativized equicontinuous structure relation.
Hence, for a distal minimal flow its Furstenberg tower consists entirely of 
monotone quotient maps, see \cite[Proposition 2.2]{Greschonig2014}, and in particular 
the map onto the maximal equicontinuous factor is monotone. Without such structural assumptions,
the monotonicity of the maximal equicontinuous factor does not hold in general (take, e.g., the extension of the 
shift $\tau\colon \Z \to \Z, x\mapsto x+1$ to the one-point compactification of $\Z$). 
If, however, the underlying space is locally connected, it is shown in \cite[Theorem 2.12]{HauserJaeger2017} 
that the quotient map onto the maximal 
equicontinuous factor is indeed monotone which is notable since monotone quotient maps relate the geometry of a
space to that of a quotient: Every monotone quotient map between suitable spaces induces a
surjective homomorphism on the level of fundamental groups, see \cite[Theorem 1.1]{Calcut2012}
(stated below as \cref{thm:calcut}).
Since this idea will be crucial for the proof of \cref{mthm}, we also provide an alternative 
proof for the monotonicity of the maximal equicontinuous factor under the 
assumption of local connectedness. This is based on two results that are of interest by themselves:
We characterize the local connectedness of a compact space $K$ in terms of the Banach lattice 
$\uC(K)$ and then use this to give a new characterization for the monotonicity of a 
quotient map between locally connected compact spaces. (For background information on 
Banach lattices, see \cite[Section 7.1]{EFHN2015}.) The above-mentioned monotonicity result then is 
a simple application of these characterizations. We prove these results in
\cref{section:monotonicity} after collecting some preliminaries in \cref{section:prelim}.
The main result is proved in \cref{section:mef}.

\textbf{Notation and Terminology.} By a \emph{topological dynamical system} $(K, S, \Phi)$
we mean a continuous action $\Phi\colon S\times K \to K$ of a topological semigroup $S$ on a 
compact space $K$. (We always assume compact spaces to be Hausdorff.) We usually drop $\Phi$ from 
the notation and write $sx$ instead of $\Phi(s, x)$ for $s\in S$ and $x\in K$. 
We simply call $(K, S)$ a \emph{flow} if $S$ is a 
group. If we refer to a pair $(K, \phi)$ of a compact space $K$ and a continuous map $\phi\colon K\to K$ 
as a topological dynamical system, we regard the $\N$ or $\Z$-action on $K$ given by the powers of $\phi$, 
depending on whether $\phi$ is explicitly specified to be invertible or not. By an 
\emph{extension} $p\colon (K, S)\to (L, S)$ of topological dynamical systems we mean a continuous, 
surjective, $S$-equivariant function 
$p\colon K\to L$. Given such an extension, we also call $p$ a \emph{factor map} and $(L, S)$ a \emph{factor} of $(K, S)$.
A system $(K, S)$ is called \emph{equicontinuous} if the family $\{ \Phi(s, \cdot) \mid s\in S\} 
\subset \uC(K, K)$ is equicontinuous.

If $\phi\colon K\to L$ is a continuous function between compact spaces, we
denote by $T_\phi$ its \emph{Koopman operator} 
\begin{align*}
  T_\phi\colon\uC(L)&\to \uC(K), \\
  f&\mapsto f\circ \phi.
\end{align*}
We assume the reader to be familiar with Koopman operators 
and the theory of commutative $\uC^*$-algebras and refer to \cite[Chapter 4]{EFHN2015} for 
background information.
If $H$ is an abelian group, then $\rank(H)$ denotes its torsion-free rank, i.e., 
the dimension of the $\Q$-vector space $\Q\otimes H$. For a topological space $X$ and $i\in\N_0$, 
we denote by $\ub_i(X)$ the $i$-th Betti number of $X$. Note that for a compact, locally 
path-connected space $X$, $\ub_0(X)$ is the (finite) number of connected components of $X$.

\textbf{Acknowledgements.} Part of this work was 
done during a stay at Kiel University and the author is very grateful to M.\ Haase 
for his kind hospitality during this time, as well as for bringing the book \cite{Bronstein1979}
and thereby \cite{Rees1977} and \cite{Ihrig1984} to the author's attention.

\section{Factors and invariant subalgebras}\label{section:prelim}

Consider the categories $\mathbf{CTop}$ of 
compact topological spaces and $\mathbf{C^*_{com,1}}$ of commutative unital $\mathrm{C}^*$-algebras 
as well as the contravariant functor 
\begin{align*}
  \mathrm{C}\colon \mathbf{CTop} \to \mathbf{C^*_{com, 1}}
\end{align*}
given by $K\mapsto \uC(K)$ and $\phi \mapsto T_\phi$ for compact 
spaces $K$ and $L$ and continuous functions $\phi\colon K\to L$. It 
is a consequence of the classical Gelfand representation theorem (see \cite[Theorem 4.23]{EFHN2015})
that $\uC$ is an antiequivalence of categories. This
allows to use operator-theoretic concepts to understand topological dynamical 
systems (cf.\ \cref{thm:mefequi}) but also, conversely, to use 
geometric tools to obtain results about operators (cf.\ \cref{cor_spectrum}).
A particular consequence of this antiequivalence that we will use throughout the article 
is the relationship between factors of a topological dynamical system $(K, S)$ and 
$S$-invariant unital $\uC^*$-subalgebras of $\uC(K)$: Suppose $p\colon (K, S) \to (L, S)$
is a factor map of topological dynamical systems. Then the Koopman operator 
\begin{align*}
  T_p \colon \uC(L)&\to \uC(K) \\
  f &\mapsto f\circ p
\end{align*}
is an $S$-equivariant $\uC^*$-embedding and its image $\mathcal{A}_L \defeq T_p(\uC(L)) \subset \uC(K)$ is an $S$-invariant 
unital $\uC^*$-subalgebra of $\uC(K)$, i.e., $T_s(\mathcal{A}_L) \subset \mathcal{A}_L$ for each $s\in S$. 
Moreover, if $q\colon (K, S) \to (M, S)$
is another factor map such that $T_q(\uC(M)) = T_p(\uC(L))$, then 
\begin{align*}
   \Psi \defeq T_p^{-1}\circ T_q \colon \uC(M) \to \uC(L)
\end{align*}
defines an $S$-equivariant $\uC^*$-isomorphism and so there is a unique $S$-equivariant homeomorphism $\eta \colon L\to M$ 
such that $\Psi = T_\eta$, making the following diagram commutative:
\begin{align*}
  \xymatrix{
    & (K, S) \ar[dl]_-p \ar[dr]^-q & \\
    (L, S) \ar[rr]^-\eta & & (M, S)
  }
\end{align*}
This shows that a factor of $(K, S)$ is, up to isomorphy, 
uniquely determined by its corresponding $\uC^*$-subalgebra of $\uC(K)$.
Conversely, every $S$-invariant unital $\uC^*$-subalgebra of $\uC(K)$
canonically corresponds to a factor of $(K, S)$ via the Gelfand representation theorem 
and one thereby obtains, again up to isomorphy, a one-to-one correspondence 
between factors of $(K, S)$ and $S$-invariant unital $\uC^*$-subalgebras
of $\uC(K)$. For the convenience of the reader, \cref{rem:algebra-quotient} below 
explains a more elementary approach to the this correspondence.

Given that the $\uC^*$-subalgebras of many factors such as the maximal trivial factor, 
the maximal equicontinuous 
factor, the maximal tame factor, the Kronecker factor, or the Abramov factor admit 
relatively simple descriptions, it is convenient to study factors and factor
maps via their corresponding subalgebras. We do so in \cref{cor_algebrachar} to give a 
simple criterion characterizing the monotonicity of a factor via its corresponding 
$\uC^*$-subalgebra. We will then see that this criterion is readily verified for the 
$\uC^*$-subalgebra of the maximal equicontinuous factor.

\begin{example}\label{example:factor}
  Let $(K, S)$ be a topological dynamical system.
  \begin{enumerate}
    \item\label{example:factor_a} The factor consisting of a single point corresponds 
    to the subalgebra $\C \1_K \subset \uC(K)$ of constant functions.
    \item\label{example:factor_b} Let $p\colon (K, S) \to (L, S)$ be a factor map onto 
    a \emph{trivial} factor, i.e., one on which $S$ acts trivially. Then $T_p(\uC(L))$ 
    is a 
    subalgebra of  
    \begin{align*}
      \mathcal{A}_{\mathrm{triv}} \defeq \left\{ f\in \uC(K) \mmid \forall s\in S\colon T_s f = f \right\}.
    \end{align*}
    This \enquote{fixed} algebra
    corresponds to the \emph{maximal trivial factor} $(K_\mathrm{triv}, S)$ of $(K, S)$
    through which every factor map onto another trivial factor factorizes.
    \item\label{example:factor_c} Similarly, the subalgebra
    \begin{align*}
      \mathcal{A}_{\mathrm{eq}} \defeq \left\{f\in \uC(K) \mmid \left\{T_s f \mmid s\in S \right\} \text{is equicontinuous}\right\}
    \end{align*}
    corresponds to the \emph{maximal equicontinuous factor} $(K_\eq, S)$ of $(K, S)$ since a 
    topological dynamical system $(M, S)$ is equicontinuous if and only if 
    for each $g\in\uC(M)$ the orbit $\{T_s g \mid s\in S\}$ is equicontinuous. By 
    the Arzel\`a-Ascoli theorem, this is equivalent to the orbits $\{T_s g \mid s\in S\}$ being 
    relatively compact for each $g\in\uC(M)$. If $S$ acts via homeomorphisms, this means that 
    the maximal equicontinuous factor $(K_\eq, S)$ coincides with the \emph{Kronecker factor}
    which corresponds, for abelian $S$, to the $\uC^*$-subalgebra
    \begin{align*}
      \mathcal{A} \defeq \overline{\lin}\left\{f\in\uC(K) \mid \forall s\in S \exists \lambda_s \in \T \colon T_sf = \lambda_s f\right\}
    \end{align*}
    spanned by the eigenfunctions of the action of $S$, see \cite[Corollary 16.32]{EFHN2015}.
  \end{enumerate}
\end{example}

\begin{remark}\label{rem:mtf}
  Let $(M, G)$ be an equicontinuous flow. Then each orbit closure is a minimal subset 
  of $M$ (see \cite[Lemma 2.3]{Auslander1988}) and so $M$ decomposes into minimal subsets.
  Moreover, the orbit closure relation
  \begin{align*}
    {\sim_G} \defeq \left\{(x,y)\in M\times M \mmid \overline{Gx} = \overline{Gy} \right\}.
  \end{align*}
  is a closed equivalence relation (see \cite[Exercise 2.6]{Auslander1988}) and a moment's 
  though reveals that hence, $M/\sim_G$ together with the trivial $G$-action is the maximal 
  trivial factor of $(M, G)$. In particular, $(M, G)$ is minimal if and only if 
  the maximal trivial factor of $(M, G)$ is a point. We note for the proof of \cref{mthm} that 
  as a consequence,
  given an arbitrary system $(K, G)$, its maximal equicontinuous factor is minimal 
  if and only if every $G$-invariant function $f\in\uC(K)$ is constant.
\end{remark}

\begin{remark}\label{rem:algebra-quotient}
  Given a compact space $K$, one can describe the relationship between compact 
  quotients $L$ of $K$ and unital $\uC^*$-subalgebras $\mathcal{A}$ of $\uC(K)$ without using
  the Gelfand representation theory: If $p\colon K\to L$ is a continuous surjective map
  onto a compact space $L$, define the unital $\uC^*$-subalgebra 
  \begin{align*}
    \mathcal{A}_L \defeq \{f\in \uC(K) \mid \forall l\in L \forall x, y\in p^{-1}(l)\colon f(x) = f(y)\}
  \end{align*}
  of functions constant on fibers of $p$ and note that $\mathcal{A}_L = T_p(\uC(L))$. Conversely,
  if $\mathcal{A}\subset \uC(K)$ is a unital $\uC^*$-subalgebra, define the closed equivalence 
  relation
  \begin{align*}
    {\sim_{\mathcal{A}}} \defeq \{ (x, y)\in K\times K \mid \forall f\in\mathcal{A}\colon f(x) = f(y)\}.
  \end{align*}
  and set $L_\mathcal{A} \defeq K/{\sim_{\mathcal{A}}}$.
  Then the assignments $L \mapsto \mathcal{A}_L$ and $\mathcal{A} \mapsto L_\mathcal{A}$ are,
  up to isomorphy of the compact spaces, mutually inverse. Analogously, one obtains 
  the above-explained correspondence between factors and invariant subalgebras if one 
  considers topological dynamical systems $(K, S)$ on $K$.
\end{remark}

\section{Local connectedness and monotonicity of factors}\label{section:monotonicity}

As noted in the introduction, one cannot expect the factor map onto the maximal 
equicontinuous factor of a flow to be monotone in general, i.e., its preimages 
of points are not generally connected. Therefore,
we focus on
quotient maps $p\colon K\to L$ between locally connected compact spaces and characterize 
their monotonicity in terms of the subalgebra $\mathcal{A}_L \subset \uC(K)$ of functions 
constant on fibers of $p$. We then apply this to the maximal equicontinuous factor. 
Recall the following elementary results on locally connected spaces.

\begin{lemma}\label{lem:locallyconnected}
  Let $X$ and $Y$ be topological spaces. 
  \begin{enumerate}
    \item\label{lem:locallyconnected_a} $X$ is locally connected if and only if for every open set $O\subset X$
    each connected component of $O$ is open in $X$.
    \item\label{lem:locallyconnected_b} $X$ is locally connected if and only if for every
    basis $\mathcal{B}$ for the topology of $X$ and every $O\in\mathcal{B}$ each 
    connected component of $O$ is open in $X$.
    \item\label{lem:locallyconnected_c} If $X$ is compact, $X$ is locally connected if and only if it is uniformly locally connected,
    i.e., if each entourage $U\in\mathcal{U}_X$ contains an entourage $V\in\mathcal{U}_X$ 
    such that $V[x]$ is connected for each $x\in X$.
    \item\label{lem:locallyconnected_d} If $X$ is locally connected and $f\colon X\to Y$ is a surjective quotient map, 
    then $Y$ is locally connected.
    \item\label{lem:locallyconnected_e} If $X$ is compact and locally connected, $X$ has only finitely many connected 
    components.
  \end{enumerate}
\end{lemma}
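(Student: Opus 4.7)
The plan is to prove these five related claims in order, hinging on a single observation: if $X$ is locally connected then the connected component of a point $x$ in an open set $O$ is itself open, and is the largest connected subset of $O$ containing $x$. For (a), one direction is seen by noting that a connected open neighborhood of $x$ inside $O$ lies in any component $C \ni x$, so $C$ is open; for the other, the component of $x$ in any open neighborhood is, by hypothesis, a connected open neighborhood. Part (b) follows quickly: the forward direction is just (a) restricted to basis elements, and conversely, given $x$ and an open $U \ni x$, one picks $B \in \mathcal{B}$ with $x \in B \subset U$ and takes the component of $x$ in $B$ as the desired connected open neighborhood.

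The only real work is in (c). Given an entourage $U$, I would choose a symmetric entourage $W$ with $W \circ W \subset U$, and, using local connectedness, for each $y \in X$ an open connected neighborhood $C_y \subset W[y]$. The key construction is then
\begin{align*}
  V \defeq \bigcup_{y \in X} C_y \times C_y,
\end{align*}
which is a symmetric open neighborhood of the diagonal---hence an entourage, since $X$ is compact Hausdorff---and lies in $W \circ W \subset U$. Crucially, $V[x] = \bigcup\{C_y : x \in C_y\}$ is a union of connected sets sharing the point $x$, hence connected. The main obstacle is really this step: the naive attempt of finding $V \subset U$ such that each $V[x]$ merely lies in some connected set does not force $V[x]$ itself to be connected, and the symmetric diagonal union above is the right remedy. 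The reverse direction of (c) is immediate, since an entourage with connected slices gives a neighborhood base of connected sets at each point.

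Part (d) reduces via (a) to showing that every connected component $C$ of an open $O \subset Y$ is open; since $f$ is a quotient map, this amounts to showing that $f^{-1}(C) \subset X$ is open. Writing the open set $f^{-1}(O) \subset X$ as the union of its connected components (each of which is open by (a) applied to $X$), each such component maps under $f$ into a single component of $O$, and $f^{-1}(C)$ is precisely the union of those components mapping into $C$, hence open. Finally, (e) is a one-line consequence of (a): the connected components of $X$ form a disjoint open cover, which by compactness must be finite in number.
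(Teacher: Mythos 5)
Your proofs are correct and are the standard arguments; the paper itself offers no proof here, merely citing Willard (Theorem 27.9, Corollary 27.11, Theorem 27.12) and James (Proposition 9.39), so a self-contained verification like yours is exactly what those references contain. The only point worth flagging is the reverse direction of part (c): an entourage with connected slices gives each point a neighborhood base of connected, but not necessarily open, neighborhoods, so "immediate" quietly uses the fact that connectedness im kleinen at every point implies local connectedness --- which does follow in one line from your part (a), since any connected neighborhood of $x$ inside an open $O$ is absorbed by the component of $x$ in $O$, making that component open.
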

\begin{proof}
  For \ref{lem:locallyconnected_a},
  \ref{lem:locallyconnected_d}, and \ref{lem:locallyconnected_e} see Theorem 27.9, Corollary 27.11, and 
  Theorem 27.12 of \cite{Willard2004} and for \ref{lem:locallyconnected_c} see \cite[Proposition 9.39]{James1999}.
  \Cref{lem:locallyconnected_b} follows from the definition of 
  local connectedness and \cref{lem:locallyconnected_a}.
\end{proof}

Since in a compact space $K$ the sets of the form 
$[f\neq 0]$ for continuous, complex-valued functions $f\in\uC(K)$ constitute 
a base of the topology, \cref{lem:locallyconnected}\ref{lem:locallyconnected_a} and 
\ref{lem:locallyconnected_b} provide
a natural way to characterize the local connectedness of $K$ purely in terms of the Banach lattice $\uC(K)$.
For this purpose, we call $f, g\in \uC(K)$ \emph{orthogonal}
and write $f\perp g$ if they are orthogonal in the Banach lattice $\uC(K)$. (That is,
$f\perp g$ if and only if $|f|\wedge |g| = 0$ which is equivalent to $fg = 0$.) A decomposition 
$f = g + h$ is called \emph{orthogonal} if $g$ and $h$ are orthogonal. A function $f\in\uC(K)$ 
is called \emph{reducible} if $f = 0$ or if there is an orthogonal decomposition $f = g + h$ with 
nonzero $g, h\in\uC(K)$ and $f$ is called \emph{irreducible} otherwise. If $f = g + h$ is an 
orthogonal decomposition and $g$ is irreducible, then $g$ is 
called an \emph{irreducible part} of $f$. For a function $f\in\uC(K)$, define
\begin{align*}
  \irr(f) \defeq \{g\in \uC(K) \mid g \text{ is an irreducible part of } f\}
\end{align*}
and for a subset $\mathcal{F}\subset \uC(K)$, set
\begin{align*}
  \irr(\mathcal{F}) \defeq \bigcup_{f\in \mathcal{F}} \irr(f).
\end{align*}
A decomposition 
$f = \sum_{g\in \mathcal{F}} f$ for some at most countable set $\mathcal{F}\subset \uC(K)$ 
is called \emph{irreducible} if all $g\in\mathcal{F}$ are irreducible and pairwise orthogonal
and the sum converges uniformly to $f$.

\begin{proposition}\label{lc_char}
  Let $K$ be a compact space.
  \begin{enumerate}
    \item\label{lc_char_a} If $f\in \uC(K)$ and $M\subset [f\neq 0]$, then $\1_M f \in\uC(K)$
    if and only if $M$ is clopen in $[f\neq 0]$.
    \item\label{lc_char_b} If $f\in\uC(K)$, then $f$ is irreducible if and only if $[f\neq 0]$ is connected.
    \item\label{lc_char_c} For each $f\in\uC(K)$
    \begin{align*}
      \irr(f) = \left\{ \1_O f \mid O \text{ is an open connected component of } [f \neq 0] \right\}.
    \end{align*}
    \item\label{lc_char_d} For each $f\in \uC(K)$ and $\epsilon > 0$ the set 
    \begin{align*}
      \left\{ g\in \irr(f) \mid \|g\|_\infty > \epsilon \right\}
    \end{align*}
    is finite. In particular, $\irr(f)$ is countable.
    \item\label{lc_char_e} $K$ is locally connected if and only if each $f\in\uC(K)$ admits a unique 
    irreducible decomposition. In that case, the irreducible decomposition is given by
    \begin{align*}
      f = \sum_{g\in\irr(f)} g.
    \end{align*}

  \end{enumerate}
\end{proposition}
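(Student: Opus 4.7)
The plan is to prove (a)--(e) sequentially, translating back and forth between clopen-decomposition properties of $[f\neq 0]$ and lattice-algebraic properties of functions of the form $\1_M f$. For (a), continuity of $\1_M f$ forces $(\1_M f)/f = \1_M|_{[f\neq 0]}$ to be a continuous $\{0,1\}$-valued function, so $M$ is clopen in $[f\neq 0]$; the converse is routine at points of $[f\neq 0]$ and follows at zeros of $f$ from the pointwise bound $|\1_M f|\leq |f|$. For (b), a nontrivial clopen decomposition $[f\neq 0]=A\sqcup B$ yields via (a) the orthogonal decomposition $f=\1_A f + \1_B f$ with both summands nonzero; conversely, a nontrivial orthogonal decomposition $f=g+h$ produces the disconnection $[f\neq 0]=[g\neq 0]\sqcup [h\neq 0]$, using $gh=0$ for disjointness and noting that $f(x)\neq 0$ forces $g(x)\neq 0$ or $h(x)\neq 0$.

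For (c), any irreducible part $g$ of $f$ satisfies $g=\1_{[g\neq 0]}f$ (orthogonality $g\perp (f-g)$ forces $f=g$ on $[g\neq 0]$), and continuity of $g$ together with $g=f$ on $[g\neq 0]$ shows that $[g\neq 0]$ is not only open and connected but also closed in $[f\neq 0]$, hence an open connected component; the converse uses (a) to secure continuity of $\1_O f$ and (b) to secure irreducibility. For (d), I would run a compactness argument: if infinitely many distinct $g_n=\1_{O_n}f\in\irr(f)$ have $\|g_n\|_\infty>\epsilon$, pick $x_n\in O_n$ with $|f(x_n)|>\epsilon$, extract $x_{n_k}\to x$ by compactness, observe $|f(x)|\geq\epsilon$ so that $x\in [f\neq 0]$, and derive a contradiction from the fact that $x$'s open connected component in $[f\neq 0]$ is a neighborhood of $x$ eventually containing infinitely many $x_{n_k}$, which lie in pairwise distinct components $O_{n_k}$.

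For (e), the forward direction assumes local connectedness, so by \cref{lem:locallyconnected_a} every connected component of $[f\neq 0]$ is open; by (c) these are indexed by $\irr(f)$, and by (d) the sum $\sum_{g\in\irr(f)} g$ has, for every $\epsilon>0$, only finitely many summands of norm exceeding $\epsilon$, so by the disjointness of their supports it converges uniformly, with limit $f$ by pointwise evaluation. Uniqueness follows because any irreducible decomposition $f=\sum_m h_m$ has $\{[h_m\neq 0]\}$ equal to the family of connected components of $[f\neq 0]$, forcing $\{h_m\}=\irr(f)$. Conversely, if every $f\in\uC(K)$ admits an irreducible decomposition, then for each $f$ the sets $[g\neq 0]$ with $g\in\irr(f)$ are disjoint open connected sets covering $[f\neq 0]$, hence exactly its components; since the sets $[f\neq 0]$ form a basis for the topology of $K$, \cref{lem:locallyconnected_b} yields local connectedness. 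The main obstacle is (d): compactness must be leveraged to turn a family of pairwise disjoint supports with a uniform norm lower bound into a contradiction using the component structure, and this is precisely the ingredient that makes the uniform convergence in (e) go through.
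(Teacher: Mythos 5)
Your treatment of \ref{lc_char_a}, \ref{lc_char_b}, \ref{lc_char_c}, and \ref{lc_char_e} matches the paper's argument in all essentials: the paper phrases \ref{lc_char_a} via the multiplication operator $\uC_\ub([f\neq 0])\to\uC(K)$, $g\mapsto gf$, and proves the converse direction of \ref{lc_char_e} by producing, via complete regularity, an $f$ with $x\in[f\neq 0]\subset U$ and extracting the unique $g\in\irr(f)$ with $g(x)\neq 0$, rather than appealing to \cref{lem:locallyconnected}\ref{lem:locallyconnected_b}; these are cosmetic differences and those parts of your proposal are correct.

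Part \ref{lc_char_d} is where you genuinely diverge: the paper deduces finiteness from relative compactness of $\irr(f)$ via Arzel\`a--Ascoli, whereas you run a direct cluster-point argument. Your argument has a gap exactly where you invoke ``$x$'s open connected component'': in a general compact space the connected component of the accumulation point $x$ in $[f\neq 0]$ need not be open, hence need not be a neighbourhood of $x$, and then no contradiction arises. (For non-metrizable $K$ you should also pass to an $\omega$-accumulation point of $\{x_n\}$ rather than a convergent subsequence, but that is minor.) This gap is not repairable, because \ref{lc_char_d} as stated is false for general compact $K$: take $K=\{0\}\cup\{1/n \mid n\in\N\}\subset\R$ and $f=\1_K$. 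Each singleton $\{1/n\}$ is an open connected component of $[f\neq 0]=K$, so $\1_{\{1/n\}}\in\irr(f)$ for every $n$ by \ref{lc_char_c}, giving infinitely many irreducible parts of norm $1$; here the accumulation point is $0$, whose component $\{0\}$ is not open. (The paper's own proof breaks at the same spot: $\irr(\1_K)$ is not equicontinuous at $0$ in this example, so the appeal to Arzel\`a--Ascoli fails as well.) Under the additional hypothesis that $K$ is locally connected --- which holds in every later use of \ref{lc_char_d}, in particular in your proof of \ref{lc_char_e} --- the component of $x$ is open and your cluster-point argument goes through verbatim, so the correct fix is to add local connectedness to the hypotheses of \ref{lc_char_d}. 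With that amendment the uniform convergence in \ref{lc_char_e} is justified and the rest of the proposal stands.
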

\begin{proof}
  For a fixed $f\in\uC(K)$, the multiplication operator 
  \begin{align*}
    \uC_\ub( [ f \neq 0] ) \rightarrow \uC(K), \quad g \mapsto gf
  \end{align*}
  is well-defined and if $M\subset [ f\neq 0 ]$ is clopen, then 
  $\1_M|_{[f\neq 0]} \in \uC_\ub( [f\neq 0] )$. Therefore, $f\1_M \in \uC(K)$. Conversely, if $M\subset [f\neq 0]$
  is such that $\1_M f \in \uC(K)$, the restriction
  $\1_M f|_{[f\neq 0]}$ is continuous and so dividing by $f|_{[f\neq 0]}$ yields the continuity 
  of $\1_M|_{[f\neq 0]}$.
  This proves \ref{lc_char_a} which in turn yields \ref{lc_char_b}.
  
  If $O$ is an open connected component of $[f\neq 0]$, then $\1_O f \in\irr(f)$ by 
  \ref{lc_char_a} and \ref{lc_char_b}. Conversely, take $g\in \irr(f)$. Then 
  $g = \1_{[g\neq 0]}f$ and so by \ref{lc_char_a}, $[g\neq 0]$ is a clopen subset of
  $[f\neq 0]$ and hence a union of connected components of $[f\neq 0]$. However, since 
  $g$ is irreducible, $[g\neq 0]$ is connected and so it is
  an open connected component of $[f\neq 0]$, proving \ref{lc_char_c}.
  Moreover, \ref{lc_char_c} yields that $\irr(f)$ is a bounded
  and equicontinuous set in $\uC(K)$ and so by the Arzel\`a-Ascoli theorem, $\irr(f)$ is relatively
  compact. Since by \ref{lc_char_c}, for every two $g, h\in\irr(f)$ with $g\neq h$ one 
  has $\|g-h\| = \max\{\|g\|, \|h\|\}$, \ref{lc_char_d} follows from the relative compactness
  of $\irr(f)$.

  Now suppose $K$ to be locally connected and take $f\in\uC(K)$. Since $K$ 
  is locally connected, each connected component of $[f\neq 0]$ is open 
  and so by \ref{lc_char_c} and \ref{lc_char_d}, the sum 
  $\sum_{g\in\irr(f)} g$
  converges uniformly to $f$. Hence, $f$ admits an irreducible decomposition which is 
  readily verified to be unique. Conversely, assume that each $f\in\uC(K)$ admits 
  an irreducible decomposition and let $x\in K$. To show that $K$ is locally
  connected at $x$, let $U\subset K$ be an open 
  neighborhood of $x$. Since $K$ is completely regular, there exists an
  $f\in \uC(K)$ with $x \in [f\neq 0] \subset U$. Moreover, since $f$ admits a unique
  irreducible decomposition, there is a unique $g\in \irr(f)$ such that $g(x) = f(x) \neq 0$.
  In particular, $x\in [g \neq 0] \subset [f \neq 0] \subset U$ and since $g$ is irreducible, $[g\neq 0]$ is connected,
  showing that $K$ is locally connected.
\end{proof}

After these preparatory notes on local connectedness, we now turn towards the notion of monotonicity and 
its characterizations. We restrict to compact spaces although many of the arguments are easily 
adapted to completely regular spaces.

\begin{definition}
  Let $X$ and $Y$ be topological spaces and $p\colon X\to Y$ a
  map. Then $p$ is called \emph{monotone} if for each 
  $y\in Y$ the preimage $p^{-1}(y)$ is a connected subset of $X$.
\end{definition}

If $p\colon K\to L$ is a continuous surjective map, then as noted in \cref{section:prelim} and in particular 
\cref{rem:algebra-quotient}, $L$ is, up to isomorphy, 
uniquely determined by the subalgebra $\mathcal{A}_L = T_p(\uC(L)) \subset \uC(K)$ of functions 
constant on the fibers of $p$. If $K$ is locally connected, this 
allows to use \cref{lc_char} to characterize the monotonicity of a quotient map $p\colon K\to L$
in terms of the subalgebra $\mathcal{A}_L$ and the Koopman operator $T_p$.

\begin{proposition}\label{prop:monotonechar}
  Let $K$ and $L$ be compact spaces, $K$ locally connected, and $p\colon K\to L$
  continuous and surjective. Then the following assertions are equivalent.
  \begin{enumerate}
    \item\label{part:monotonechar_a} $p$ is monotone.
    \item\label{part:monotonechar_b} For every connected set $C \subset L$ the 
    preimage $p^{-1}(C)$ is connected.
    \item\label{part:monotonechar_c} For every open, connected set $U\subset L$ the 
    preimage $p^{-1}(U)$ is connected.
    \item\label{part:monotonechar_d} For every irreducible $f\in\uC(L)$ the set $p^{-1}([f \neq 0])$ is connected.
    \item\label{part:monotonechar_e} $T_p$ preserves irreducibility of functions.
    \item\label{part:monotonechar_f} The subalgebra $\mathcal{A}_L = T_p(\uC(L)) \subset \uC(K)$
    satisfies $\irr(\mathcal{A}_L) \subset \mathcal{A}_L$.
  \end{enumerate}
\end{proposition}
\begin{proof}
  For the implication \ref{part:monotonechar_a} $\implies$ \ref{part:monotonechar_b},
  suppose $C\subset L$ to be connected and that $p^{-1}(C) = U\cup V$ for disjoint, open sets 
  $U, V\subset p^{-1}(C)$. Then $U$ and $V$ are saturated, i.e., $p^{-1}(p(U)) = U$ and 
  $p^{-1}(p(V)) = V$, since each 
  fiber of $p$ over $C$ is connected.
  Hence, the open (!) sets $p(U)$ and $p(V)$ form a cover of $C$ by 
  disoint, open sets. Since $C$ is connected, $U = \emptyset$ or 
  $V = \emptyset$ and so $p^{-1}(C)$ is connected.
  
  The implication \ref{part:monotonechar_b} $\implies$ \ref{part:monotonechar_c} 
  is trivial. For the implication \ref{part:monotonechar_c} $\implies$ \ref{part:monotonechar_a},
  note that for $l \in L$
  \begin{align*}
    p^{-1}(l) 
    = \bigcap_{U\in\mathcal{U}(l)} p^{-1}(U)
    = \bigcap_{\substack{U\in\mathcal{U}(l) \\ \text{closed, connected}}} p^{-1}(U)
  \end{align*}
  as $L$ is locally connected by \cref{lem:locallyconnected}. Since, in a compact space, the 
  intersection of a decreasing family of closed, connected subsets is again connected, 
  $p$ is monotone.
  
  The equivalence of \ref{part:monotonechar_d} and \ref{part:monotonechar_e} 
  follows since $[T_p(f) \neq 0] = [f\circ p \neq 0] = p^{-1}([f\neq 0])$.
  Moreover, \ref{part:monotonechar_e} and \ref{part:monotonechar_f} are 
  seen to be equivalent using the existence of irreducible decompositions for 
  functions in $\uC(L)$. Finally, 
  the implication \ref{part:monotonechar_c} $\implies$ \ref{part:monotonechar_d}
  is trivial and the converse implication follows analogously to 
  \ref{part:monotonechar_c} $\implies$ \ref{part:monotonechar_a} because, $L$ being locally 
  connected and completely regular, the sets of the form $[f \neq 0]$ for irreducible 
  $f\in\uC(L)$ form a basis of the topology of $L$ which allows to copy the argument.
\end{proof}

\begin{definition}
  Let $K$ be a compact space. A unital $\uC^*$-subalgebra $\mathcal{A}\subset \uC(K)$
  is called \emph{monotone} if the canonical quotient map $K \to K/\sim_{\mathcal{A}}$ is
  monotone.
\end{definition}

\begin{corollary}\label{cor_algebrachar}
  Let $K$ be a locally connected compact space and $\mathcal{A} \subset \uC(K)$
  a unital $\uC^*$-subalgebra. Then $\mathcal{A}$ is monotone if and only if 
  it contains the irreducible parts of all its functions.
\end{corollary}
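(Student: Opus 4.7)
The plan is to reduce the corollary directly to the equivalence \ref{part:monotonechar_a} $\Leftrightarrow$ \ref{part:monotonechar_f} in \cref{prop:monotonechar} by translating between the language of quotient maps and the language of invariant subalgebras. First I would introduce the geometric object dual to $\mathcal{A}$: set $L \defeq K/{\sim_{\mathcal{A}}}$ and let $p\colon K\to L$ be the canonical continuous surjection. By definition of a monotone subalgebra, $\mathcal{A}$ is monotone precisely when $p$ is monotone, so the corollary becomes a statement about the quotient map $p$.

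Next I would identify the subalgebra of functions constant on the fibers of $p$ with $\mathcal{A}$ itself. This is exactly the content of \cref{rem:algebra-quotient}: under the assignments $\mathcal{A}\mapsto L_{\mathcal{A}}$ and $L\mapsto \mathcal{A}_L$, one has $\mathcal{A}_L = T_p(\uC(L)) = \mathcal{A}$. Since $K$ is locally connected by hypothesis, the hypotheses of \cref{prop:monotonechar} are met (and, via \cref{lem:locallyconnected}\ref{lem:locallyconnected_d}, $L$ is also locally connected, which the proof of that proposition uses on the $L$-side).

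With this setup, the equivalence \ref{part:monotonechar_a} $\Leftrightarrow$ \ref{part:monotonechar_f} of \cref{prop:monotonechar} applied to $p$ reads: $p$ is monotone if and only if $\irr(\mathcal{A}_L)\subset \mathcal{A}_L$. Substituting $\mathcal{A}_L = \mathcal{A}$ yields that $\mathcal{A}$ is monotone if and only if $\irr(\mathcal{A})\subset \mathcal{A}$, i.e., $\mathcal{A}$ contains the irreducible parts of all its functions, which is the desired equivalence.

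There is no real obstacle; all of the substantive work — the characterization of local connectedness via irreducible decompositions in \cref{lc_char} and the six-fold equivalence in \cref{prop:monotonechar} — has already been carried out. The corollary is essentially a dictionary entry, and the only small thing to double-check is that the correspondence of \cref{rem:algebra-quotient} identifies $\mathcal{A}$ with the pullback $T_p(\uC(L))$ on the nose (not merely up to isomorphism), which is immediate from the construction $L=K/{\sim_{\mathcal{A}}}$.
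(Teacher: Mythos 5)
Your proposal is correct and matches the paper's (implicit) argument: the corollary is intended as an immediate consequence of \cref{prop:monotonechar}\ref{part:monotonechar_a}$\Leftrightarrow$\ref{part:monotonechar_f} applied to the canonical quotient $p\colon K\to K/{\sim_{\mathcal{A}}}$, using the correspondence of \cref{rem:algebra-quotient} to identify $T_p(\uC(K/{\sim_{\mathcal{A}}}))$ with $\mathcal{A}$. The one point you flag as needing a double-check, namely that this identification holds exactly and not just up to isomorphism, is indeed fine (it is the Stone--Weierstrass half of the correspondence already set up in \cref{section:prelim}).
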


As mentioned in \cref{section:prelim}, many abstractly defined factors in topological
dynamics, including the maximal equicontinuous factor, naturally have corresponding 
$\uC^*$-subalgebras that admit simple descriptions. Hence, \cref{prop:monotonechar} and \cref{cor_algebrachar}
provide a useful way of verifying the monotonicity of factors. For the maximal equicontinuous 
factor, this yields the following.

\begin{theorem}\label{thm:mefequi}
  Let $(K, S)$ be a topological dynamical system such 
  that $K$ is locally connected and the semigroup $S$ acts on $K$ via monotone maps.
  Then the factor map onto the maximal equicontinuous factor of $(K, S)$ is monotone.
\end{theorem}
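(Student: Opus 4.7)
My plan is to apply \cref{cor_algebrachar}: since $K$ is locally connected, it suffices to show that the $\uC^*$-subalgebra $\mathcal{A}_\eq\subset\uC(K)$ corresponding to the maximal equicontinuous factor contains the irreducible parts of each of its functions. So let $f\in\mathcal{A}_\eq$ and let $g$ be an irreducible part of $f$; by \cref{lc_char}\ref{lc_char_c} we may write $g=\1_O f$ for a connected component $O$ of $[f\neq 0]$, which is open by local connectedness. For every $s\in S$,
\[
  T_s g \;=\; \1_{s^{-1}(O)}\cdot T_s f,
\]
and the goal is to deduce equicontinuity of $\{T_s g\mid s\in S\}$ from that of $\{T_s f\mid s\in S\}$.

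The key geometric observation is that $\partial\bigl(s^{-1}(O)\bigr)\subset[T_s f=0]$ for every $s\in S$. Indeed, if $z\in\overline{s^{-1}(O)}\setminus s^{-1}(O)$, continuity of $s$ places $s(z)\in\overline{O}\setminus O$; were $s(z)$ in $[f\neq 0]$, it would lie in a different connected component $O'$ of $[f\neq 0]$, but local connectedness makes $O'$ open and disjoint from $O$, forcing $\overline{O}\cap O'=\emptyset$---a contradiction. Hence $T_s f(z)=f(s(z))=0$.

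Combining this with the uniform local connectedness of $K$ (\cref{lem:locallyconnected}\ref{lem:locallyconnected_c}) then yields the equicontinuity of $\{T_s g\}$. Given $\epsilon>0$, pick an entourage $U$ of $K$ witnessing $\epsilon$-equicontinuity of $\{T_s f\}$ and then an entourage $V\subset U$ whose fibres $V[x]$ are all connected. For $(x,y)\in V$ the cases where $x,y$ are both in $s^{-1}(O)$ or both outside it are immediate; in the mixed case, say $x\in s^{-1}(O)$ and $y\notin s^{-1}(O)$, the connected set $V[x]$ meets both $s^{-1}(O)$ and its complement, hence meets $\partial(s^{-1}(O))$ at some $z$. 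By the previous step $T_s f(z)=0$, so $|T_s g(x)|=|T_s f(x)-T_s f(z)|<\epsilon$ and therefore $|T_s g(x)-T_s g(y)|<\epsilon$. Hence $g\in\mathcal{A}_\eq$ and the proof is complete.

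The main obstacle is precisely this mixed case: it requires simultaneously that boundary points of $s^{-1}(O)$ lie in $[T_s f=0]$ (using the separation of distinct components of $[f\neq 0]$ in the locally connected space $K$) and that connected neighbourhoods in $K$ can be selected uniformly in the basepoint (uniform local connectedness). With both ingredients in hand, the rest of the argument is formal; the monotonicity hypothesis on the individual $s\in S$ plays no further role beyond ensuring that $\mathcal{A}_\eq$ is $S$-invariant and thus genuinely corresponds to a factor.
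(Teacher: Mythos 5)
Your proof is correct, but at the decisive step it takes a genuinely different route from the paper. Both arguments reduce, via \cref{cor_algebrachar}, to showing $\irr(\mathcal{A}_{\mathrm{eq}})\subset\mathcal{A}_{\mathrm{eq}}$, and both ultimately exploit uniform local connectedness together with the fact that $f$ vanishes on the boundary of each component of $[f\neq 0]$. The paper, however, first uses the monotonicity of the maps $s$ (via \cref{prop:monotonechar}) to conclude that $T_sg$ is \emph{irreducible}, hence an element of $\irr(T_sf)$, and then invokes \cref{lem:irrequi} to see that $\irr(\{T_sf\mid s\in S\})$ is equicontinuous. You bypass irreducibility entirely: you work with the decomposition $T_sg=\1_{s^{-1}(O)}\,T_sf$ and observe that $T_sf$ vanishes on $\partial(s^{-1}(O))$ no matter whether $s^{-1}(O)$ is connected, which is all the uniform-local-connectedness argument actually needs. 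The consequence is worth making explicit: your proof never uses that the elements of $S$ act by monotone maps, so it establishes the theorem for an arbitrary continuous semigroup action on a locally connected compact space. Your closing sentence undersells this, since monotonicity is not needed even for the residual role you assign it: $\mathcal{A}_{\mathrm{eq}}$ is $S$-invariant for any continuous action, because $T_tT_s=T_{st}$ maps the orbit $\{T_uf\mid u\in S\}$ into itself. Two cosmetic points: you should take the entourage $V$ to be symmetric (or accept a factor of $2$) so that the mixed case with the roles of $x$ and $y$ interchanged is covered, and the existence of $z\in V[x]\cap\partial(s^{-1}(O))$ deserves the one-line justification that a connected set meeting an open set and its complement must meet its boundary. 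Neither affects the validity of the argument.
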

\begin{proof}
  By \cref{example:factor} and \cref{cor_algebrachar}, it suffices to show that 
  the subalgebra
  \begin{align*}
    \mathcal{A}_{\mathrm{eq}} = \left\{f\in \uC(K) \mmid \{T_s f \mid s\in S\} \text{ is 
    equicontinuous}\right\}
  \end{align*}
  satisfies $\irr(\mathcal{A}_{\mathrm{eq}}) \subset \mathcal{A}_{\mathrm{eq}}$. So let 
  $f\in\mathcal{A}_{\mathrm{eq}}$ and $g\in \irr(f)$. Since the connectedness of a set is 
  preserved by taking preimages under monotone maps, $T_sg$ is irreducible for
  each $s\in S$
  and so $T_sg \in \irr(T_s(f))$. Therefore,
  \begin{align*}
    \{T_sg \mid s\in S\} 
    \subset \bigcup_{s\in S} \irr(T_s(f))
    = \irr\left(\{T_s(f) \mid s\in S\}\right).
  \end{align*}
  The latter set is equicontinuous by \cref{lem:irrequi}
  below and so it follows that the orbit $\{T_sg \mid s\in S\}$ of $g$ is equicontinuous as well.
  Hence, $g\in\mathcal{A}_{\mathrm{eq}}$.
\end{proof}

\begin{lemma}\label{lem:irrequi}
  Let $K$ be a locally connected compact space and $\mathcal{F} \subset \uC(K)$. 
  Then $\mathcal{F}$
  is equicontinuous if and only if $\irr(\mathcal{F})$ is.
\end{lemma}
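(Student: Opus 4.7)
The proof is a case analysis powered by uniform local connectedness. By \cref{lem:locallyconnected}\ref{lem:locallyconnected_c}, any entourage $U\in\mathcal{U}_K$ contains $V\in\mathcal{U}_K$ with connected $V$-neighbourhoods $V[x]$, and \cref{lc_char} writes each $f\in\uC(K)$ as a uniformly convergent sum $f=\sum_{g\in\irr(f)} g$ of pairwise orthogonal irreducible parts with pairwise disjoint \emph{open} supports $O_g\defeq[g\neq 0]$ on which $f$ coincides with $g$ (so in particular $f$ vanishes on $\partial O_g$). Together these reduce both implications to the same short estimate, whose only delicate point is the \enquote{mixed} case where $x$ and $y$ straddle the boundary of some irreducible support; uniform local connectedness resolves it by forcing the connected set $V[x]$ to cross $\partial O_g$ at a point where $f$ (hence $g$) vanishes, which is precisely what is needed to transfer equicontinuity information across the boundary.

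For the direction \enquote{$\mathcal{F}$ equicontinuous $\Rightarrow$ $\irr(\mathcal{F})$ equicontinuous}, I would start from an entourage $U$ witnessing $\epsilon$-equicontinuity of $\mathcal{F}$, shrink it to a $V\subset U$ with connected $V$-neighbourhoods, and then analyse a triple $g\in\irr(f)$, $f\in\mathcal{F}$, $(x,y)\in V$ according to the relative position of $x,y$ and $O\defeq O_g$. If $\{x,y\}\subset O$ or $\{x,y\}\cap O=\emptyset$, the bound is immediate from $g=\1_O f$. In the remaining mixed case, say $x\in O$ and $y\notin O$, the connected set $V[x]$ meets both $O$ and its complement, hence hits $\partial O$ at some $z$, and $|g(x)-g(y)|=|f(x)|=|f(x)-f(z)|<\epsilon$ by equicontinuity of $\mathcal{F}$.

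For the converse, I would likewise pick an entourage witnessing $\epsilon/2$-equicontinuity of $\irr(\mathcal{F})$ and refine to a $V$ with connected neighbourhoods. Given $f\in\mathcal{F}$ and $(x,y)\in V$, if both points lie in the support $O_g$ of the \emph{same} $g\in\irr(f)$, then $|f(x)-f(y)|=|g(x)-g(y)|<\epsilon/2$ directly. In all remaining configurations --- both when $x$ and $y$ sit in supports of \emph{distinct} irreducible parts and when $f$ vanishes at one of them --- the boundary-crossing argument applied separately in $V[x]$ and $V[y]$ yields $|f(x)|,\,|f(y)|<\epsilon/2$, so $|f(x)-f(y)|<\epsilon$, which is the desired uniform estimate.
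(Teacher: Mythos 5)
Your proof is correct and follows essentially the same route as the paper: both use uniform local connectedness (\cref{lem:locallyconnected}\ref{lem:locallyconnected_c}) to get an entourage $V$ with connected neighbourhoods $V[x]$, and both handle the delicate case by locating a point $z\in V[x]$ where $f$ vanishes, so that equicontinuity transfers across the boundary of the support $[g\neq 0]$. The only (harmless) difference is bookkeeping: you split cases by the position of $x,y$ relative to $[g\neq 0]$ while the paper splits by whether $V[x]\subset[f\neq 0]$, and in the converse direction you should take $V$ symmetric (or work inside a symmetric refinement) so that the boundary-crossing argument applies to $V[y]$ as well.
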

\begin{proof}
  Suppose $\mathcal{F}$ to be equicontinuous and take $\epsilon > 0$. Then there exists 
  an entourage $V\in\mathcal{U}_K$ such that for each $f\in\mathcal{F}$ and $(x, y)\in V$  
  one has $|f(x) - f(y)| < \frac{\epsilon}{2}$. By \cref{lem:locallyconnected}, we may 
  assume that $V[x]$ is connected for each $x\in K$. Let $g\in\irr(\mathcal{F})$, i.e., 
  $g \in \irr(f_0)$ for some $f_0\in\mathcal{F}$. We claim that $|g(x) - g(y)| < \epsilon$
  for all $(x, y)\in V$ which would show that $\irr(\mathcal{F})$ is equicontinuous. 
  
  So let $(x, y)\in V$.
  If $g(x) = g(y) = 0$, it holds trivially that $|g(x) - g(y)| < \epsilon$, so assume without
  loss of generality that
  $g(x) \neq 0$. If $V[x]$ lies in $[f\neq 0]$, then it lies in the connected component of 
  $[f\neq 0]$ containing $x$ and so $y\in V[x] \subset [g\neq 0]$. Therefore, 
  \begin{align*}
    |g(x) - g(y)| = |f(x) - f(y)| < \frac{\epsilon}{2}.
  \end{align*}
  If $V[x]\not\subseteq [f\neq 0]$, there is a $z\in V[x]$ with $f(z) = 0$ and so 
  \begin{align*}
    |g(x) - g(y)| \leq |f(x)| + |f(y)| = |f(x) - f(z)| + |f(y) - f(z)| < \epsilon.
  \end{align*}
  Therefore, $\irr(\mathcal{F})$ is equicontinuous.
  The converse implication follows similarly.
\end{proof}

Of course, the most common examples of semigroups acting via monotone maps 
are given by group actions, so that 
we obtain a new proof for \cite[Theorem 2.12]{HauserJaeger2017}.

\begin{corollary}\label{cor:mefequi}
  Let $(K, G)$ be a flow on a locally connected compact space $K$. Then the factor map onto the 
  maximal equicontinuous factor is monotone.
\end{corollary}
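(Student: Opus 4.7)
The plan is to derive this corollary as an immediate specialization of \cref{thm:mefequi}. The key observation is that in a flow $(K, G)$ the acting object is a group, so every $g \in G$ acts on $K$ as a homeomorphism (with inverse given by the action of $g^{-1}$). In particular, for every $g \in G$ and every $x \in K$, the preimage of $\{x\}$ under the map $y \mapsto gy$ is the singleton $\{g^{-1}x\}$, which is trivially connected. Hence each element of $G$ acts via a monotone map.

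With that observation in place, the hypotheses of \cref{thm:mefequi} are satisfied: $K$ is locally connected by assumption, and the group $G$ acts via monotone (in fact, bijective) maps. I would then simply invoke \cref{thm:mefequi} to conclude that the factor map onto the maximal equicontinuous factor of $(K, G)$ is monotone.

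There is essentially no obstacle here — the corollary is a direct specialization, and the only content is the remark that group actions automatically act through monotone maps. Everything nontrivial has already been carried out in \cref{cor_algebrachar}, \cref{lem:irrequi}, and \cref{thm:mefequi}, where the real work of characterizing monotonicity in terms of $\uC^*$-subalgebras and verifying this criterion for $\mathcal{A}_{\mathrm{eq}}$ has been done.
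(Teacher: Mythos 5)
Your proposal is correct and matches the paper's (implicit) argument exactly: the paper derives \cref{cor:mefequi} from \cref{thm:mefequi} by noting that group actions proceed via homeomorphisms, which are trivially monotone. Nothing further is needed.
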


It is known (see \cite[Theorem 3.16]{Blokh2005}) that if $(\T^2, \phi)$ is a minimal topological 
dynamical system on the two-torus, then $\phi$ is necessarily monotone, and there do exist non-invertible examples for 
such systems with a non-trivial maximal equicontinuous factor, see \cite[Theorem 3.3]{Kolyada2001}.
Therefore, there are examples in which \cref{thm:mefequi} provides meaningful information
that cannot be deduced from \cref{cor:mefequi}.

In preparation for the next section, we collect several properties of monotone subalgebras. 

\begin{proposition}\label{prop:mongen}
  Let $K$ be a locally connected compact space and 
  $\mathcal{A}\subset \uC(K)$ a unital $\uC^*$-subalgebra. 
  \begin{enumerate}
    \item\label{prop:mongen_a} If $(f_n)_{n\in\N}$ is a sequence in $\uC(K)$ 
    converging to $f\in\uC(K)$ and $f_n = h_n + g_n$ is an orthogonal 
    decomposition for each $n\in\N$, then 
    there is a subsequence $(f_{n_k})_{k\in\N}$ such that 
    $(g_{n_k})_{k\in\N}$ and $(h_{n_k})_{k\in\N}$ converge uniformly.
    If $f$ is irreducible, one of the sequences converges to 0 uniformly.
    \item\label{prop:mongen_b} If $(f_n)_{n\in\N}$ is a positive increasing sequence in 
    $\uC(K)$ converging uniformly to $f\in\uC(K)$, then for each $g\in\irr(f)$ there is a 
    positive increasing sequence $(g_n)_{n\in\N}$ such that
    $g_n\uparrow g$ and for each $n\in\N$ either $g_n = 0$ or $g_n\in\irr(f_n)$.
    \item\label{prop:mongen_c} If $f\in\mathcal{A}$, then $\irr(f)\subset\mathcal{A}$
    if and only if $\irr(|f|)\subset \mathcal{A}$.
    \item\label{prop:mongen_d} If $D\subset \mathcal{A}$ is a dense $\Q$-vector sublattice containing 
    $\1_K$,
    then $\irr(D) \subset D$ implies $\irr(\mathcal{A}) \subset \mathcal{A}$.
    \item\label{prop:mongen_e} Let $\mathcal{S}$ be a system of unital $\uC^*$-subalgebras 
    of $\uC(K)$ closed under arbitrary intersections and containing $\uC(K)$. 
    Then there exists a smallest monotone subalgebra
    $\mathcal{A}_{\mon}^{\mathcal{S}}\in\mathcal{S}$ containing $\mathcal{A}$, called 
    the \emph{monotone hull} of $\mathcal{A}$ in $\mathcal{S}$.
    \item\label{prop:mongen_f} Furthermore, suppose that
    for every separable $\uC^*$-subalgebra $\mathcal{B}\subset\uC(K)$ there is a separable 
    $\uC^*$-subalgebra $\mathcal{B}^\mathcal{S}\in\mathcal{S}$ satisfying 
    $\mathcal{B} \subset \mathcal{B}^\mathcal{S}$ and that if $(\mathcal{B}_n)_{n\in\N}$
    is an increasing sequence in $\mathcal{S}$, then $\overline{\bigcup_{n\in\N}\mathcal{B}_n} 
    \in \mathcal{S}$. Then the monotone hull $\mathcal{A}_\mon^\mathcal{S}$ is 
    separable if and only if $\mathcal{A}$ is.
  \end{enumerate}
\end{proposition}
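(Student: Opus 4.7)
The plan is to prove the six parts in order, since each subsequent part builds on the earlier ones. I expect Part (b) to be the main technical obstacle, as the other parts are relatively routine once (a), (b), and \cref{lc_char} are in hand.

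For Part (a), the orthogonality $g_n \perp h_n$ in $\uC(K)$ amounts to $g_n h_n = 0$ pointwise, so $E_n \defeq [g_n \neq 0]$ and $F_n \defeq [h_n \neq 0]$ are disjoint clopen subsets of $[f_n \neq 0]$ and $g_n = \1_{E_n} f_n$, $h_n = \1_{F_n} f_n$. The key step is to show $(g_n)$ is equicontinuous. Using \cref{lem:locallyconnected}\ref{lem:locallyconnected_c} together with the uniform equicontinuity of the convergent sequence $(f_n)$, pick an entourage $V$ so that $V[x]$ is connected and $|f_n(x)-f_n(y)| < \epsilon$ for $(x,y)\in V$, uniformly in $n$. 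For $(x,y)\in V$, either $V[x]\subset [f_n\neq 0]$ -- in which case $V[x]$ lies in one component of $[f_n\neq 0]$ and thus is entirely in or disjoint from $E_n$, giving $|g_n(x)-g_n(y)|\leq|f_n(x)-f_n(y)|<\epsilon$ -- or $V[x]$ contains a zero of $f_n$, forcing $|f_n|<\epsilon$ at $x$ and $|f_n|<2\epsilon$ at $y$, so $|g_n(x)-g_n(y)|<4\epsilon$. Arzel\`a--Ascoli then yields a uniformly convergent subsequence, and a further subsequence handles $(h_n)$. The limits satisfy $f=g+h$ orthogonally (pointwise product passes to the limit), and irreducibility of $f$ forces one of $g, h$ to vanish.

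For Part (b), writing $g = \1_O f$ for an open connected component $O$ of $[f\neq 0]$ via \cref{lc_char}\ref{lc_char_c}, pick $x_0\in O$ and let $C_n$ be the connected component of $[f_n\neq 0]$ containing $x_0$ (empty if $f_n(x_0)=0$). Setting $g_n\defeq\1_{C_n}f_n$, monotonicity $g_n\leq g_{n+1}\leq g$ follows from $C_n\subset C_{n+1}\subset O$ (the last inclusion because $f\geq f_n>0$ on the connected set $C_n$, which must therefore lie in the component $O$ of $[f\neq 0]$). The main topological step is $\bigcup_n C_n = O$: cover $O$ by connected open $V$ with $\overline{V}\subset O$ (using regularity and local connectedness of $K$), and for any $y\in O$ use connectedness of $O$ to produce a finite chain $V_0,\ldots,V_k$ with $x_0\in V_0$, $y\in V_k$, and consecutive overlaps. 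On the compact set $\bigcup_i\overline{V_i}\subset O$, $f$ has a positive minimum $\delta$; uniform convergence gives $f_n>\delta/2$ on this chain for large $n$, so $V_0\cup\cdots\cup V_k$ sits inside one component of $[f_n\neq 0]$ and thus $y\in C_n$. Consequently $g_n\to g$ pointwise, and Dini's theorem promotes this to uniform convergence.

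For Part (c), the implication $\irr(f)\subset\mathcal{A}\Rightarrow\irr(|f|)\subset\mathcal{A}$ is immediate from $\1_O|f| = |\1_O f|$ and $|\cdot|$-closedness of $\mathcal{A}$. The converse uses the correspondence from \cref{section:prelim}: letting $\pi\colon K\to M$ be the quotient associated to $\mathcal{A}$, a quick case analysis on points $x,y$ with $\pi(x)=\pi(y)$ (separating whether $x\in O$, $x\in[f\neq 0]\setminus O$, or $f(x)=0$, using that $f$, $|f|$, and $\1_O|f|$ are all fiber-constant) shows $x\in O\Leftrightarrow y\in O$, whence $\1_O f$ is fiber-constant and lies in $\mathcal{A}$. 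For Part (d), by (c) it suffices to treat positive $f\in\mathcal{A}_\R$. Density and the lattice structure of $D$ produce $f_n\in D$ with $f_n\uparrow f$ uniformly: start with $f'_n\in D$ with $\|f'_n-f\|<2^{-n-1}$, set $f''_n\defeq f'_n - \|f'_n-f\|\1_K \leq f$ in $D$, put $\tilde f_n\defeq f''_n\vee 0\in D$, and replace $\tilde f_n$ by $\tilde f_1\vee\cdots\vee\tilde f_n$. Applying (b) to each $g\in\irr(f)$ yields $g_n$ with $g_n = 0$ or $g_n\in\irr(f_n)\subset\irr(D)\subset D\subset\mathcal{A}$ and $g_n\uparrow g$ uniformly, so $g\in\mathcal{A}$. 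For Part (e), the intersection of all monotone $\mathcal{B}\in\mathcal{S}$ containing $\mathcal{A}$ (nonempty because $\uC(K)$ qualifies) lies in $\mathcal{S}$ by the intersection hypothesis, and the defining condition $\irr(\cdot)\subset\cdot$ from \cref{cor_algebrachar} is manifestly stable under intersection.

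For Part (f), iterate (d) within $\mathcal{S}$. First, using hypothesis (i), replace $\mathcal{A}$ by a separable $\mathcal{A}_0\in\mathcal{S}$ containing it. Inductively, given a separable $\mathcal{A}_n\in\mathcal{S}$, choose a countable dense $\Q$-vector sublattice $D_n\subset(\mathcal{A}_n)_\R$ with $\1_K\in D_n$; the set $\irr(D_n)$ is countable by \cref{lc_char}\ref{lc_char_d}. Define $\mathcal{A}_{n+1}$ to be a separable element of $\mathcal{S}$ (via hypothesis (i)) containing the $\uC^*$-subalgebra generated by $\mathcal{A}_n\cup\irr(D_n)$, and arrange $D_{n+1}\supset D_n\cup\irr(D_n)$. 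By hypothesis (ii), $\mathcal{A}_\infty\defeq\overline{\bigcup_n\mathcal{A}_n}\in\mathcal{S}$, and it is separable since each $D_n$ is countable. Moreover $D\defeq\bigcup_n D_n$ is a countable dense $\Q$-vector sublattice of $(\mathcal{A}_\infty)_\R$ containing $\1_K$ with $\irr(D)\subset D$, so (d) (applied via (c) to $(\mathcal{A}_\infty)_\R$) gives that $\mathcal{A}_\infty$ is monotone. By minimality from (e), $\mathcal{A}_\mon^\mathcal{S}\subset\mathcal{A}_\infty$, hence separable; the converse direction is trivial.
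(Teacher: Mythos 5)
Your proof is correct and, for parts (a) and (c)--(f), follows essentially the same architecture as the paper's; the genuine divergence is in part (b) and in the converse direction of (c). For (b), the paper first reduces to the case where $f$ itself is irreducible, takes $g_n$ to be the irreducible part of $f_n$ through a fixed base point, proves $[g_n\neq 0]\subset[g_{n+1}\neq 0]$ by a disjointness argument, and then deduces $f_n-g_n\to 0$ uniformly from part (a) via a subsequence-of-subsequences argument. You instead work directly with the components $C_n$ of $[f_n\neq 0]$ containing a base point of $O=[g\neq 0]$, prove $\bigcup_n C_n=O$ by a chain-connectedness argument (covering $O$ by connected open sets with compact closure in $O$ and using the positive minimum of $f$ on a finite chain), and then upgrade pointwise to uniform convergence with Dini's theorem. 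Both routes work: the paper's recycles (a) and avoids the covering argument, while yours is independent of (a) and makes the geometric picture ($C_n$ exhausting $O$) explicit. In (c), for the converse you verify directly that $\1_O f$ is constant on the fibers of the quotient associated with $\mathcal{A}$ (via \cref{rem:algebra-quotient}), whereas the paper stays inside the algebra and approximates $g=\1_{[g\neq 0]}f$ by $f|g|^{1/n}$; yours is arguably more transparent, the paper's avoids invoking the quotient space. Two cosmetic points: in (a) you supply the equicontinuity estimate for $(g_n)$ that the paper leaves implicit behind its appeal to Arzel\`a--Ascoli (worth keeping), and in (d) the constant $\|f'_n-f\|\1_K$ you subtract need not lie in the $\Q$-vector lattice $D$ unless you replace the norm by a rational upper bound, as the paper does with $\tfrac{1}{n}$.
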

\begin{proof}
  For \ref{prop:mongen_a}, it follows from the Arzel\`a-Ascoli theorem that the set 
  \begin{align*}
    \{g_n \mid n\in\N\} \cup \{h_n \mid n\in \N\}
  \end{align*}
  is relatively compact in $\uC(K)$ which implies the existence of the sequence $(n_k)_{n\in\N}$.
  The limits $g$ and $h$ of $(g_{n_k})_{k\in\N}$ and $(h_{n_k})_{k\in\N}$ 
  satisfy $f = g + h$ and $g\perp h$ and hence yield an orthogonal decomposition of $f$. Therefore, 
  if $f$ is irreducible, $g = 0$ or $h = 0$ which proves \ref{prop:mongen_a}.
    
  For \ref{prop:mongen_b}, one can pass to $(\1_{[g\neq 0]}f_n)_{n\in\N}$ and $\1_{[g\neq 0]}f = g$ 
  and hence assume that $f$ itself is already irreducible. Pick $x\in [f \neq 0]$. Without loss of generality,
  we may assume that $f_1(x) > 0$. Then for each $n\in\N$, there is a unique 
  $g_n \in \irr(f_n)$ with $g_n(x) = f_n(x) > 0$. 
  To see that $(g_n)_{n\in\N}$ is increasing, note that  
  for each $n\in\N$
  \begin{align*}
    [g_n \neq 0] \subset [f_n \neq 0] \subset [f_{n+1}\neq 0] = [g_{n+1}\neq 0] \cup [f_{n+1} - g_{n+1}\neq 0].
  \end{align*}
  This implies
  \begin{align*}
    [g_n\neq 0] = \left([g_n \neq 0]\cap[g_{n+1}\neq 0]\right) \cup \left([g_n \neq 0]\cap[f_{n+1}-g_{n+1}\neq 0]\right)
  \end{align*}
  and since this union is disjoint, $g_n$ is irreducible, and $x\in [g_n\neq 0]\cap[g_{n+1}\neq 0]$, 
  $[g_n\neq 0] \subset [g_{n+1}\neq 0]$. Since $(f_n)_{n\in\N}$ is increasing, 
  this yields that $(g_n)_{n\in\N}$ is increasing. Now consider the orthogonal 
  decomposition $f_n = g_n + (f_n - g_n)$. Since $g_n(x) \geq g_1(x) > 0$ for each 
  $n\in\N$, \ref{prop:mongen_a} yields that every subsequence of $(f_n - g_n)_{n\in\N}$
  has a subsequence converging to 0, showing that $f_n - g_n \to 0$. Therefore, 
  $g_n\uparrow f$.
  
  For \ref{prop:mongen_c}, note that if $f\in\mathcal{A}$, the absolute value yields a bijection 
  $\irr(f) \to \irr(|f|)$ and that $\irr(f) \subset \mathcal{A}$ hence implies 
  $\irr(|f|) \subset \mathcal{A}$.
  So suppose that, conversely, $\irr(|f|) \subset \mathcal{A}$ and let $g\in \irr(f)$. Then
  $|g|\in\irr(|f|) \subset \mathcal{A}$ and so $f|g|^{\frac{1}{n}} \in \mathcal{A}$ for each 
  $n\in\N$. Without loss of generality, we may assume that $|g| \leq 1$, in which case 
  $f|g|^{\frac{1}{n}}$ converges uniformly to $g$ as $n \to \infty$ and so
  $\irr(f)\subset\mathcal{A}$.
  
  Now let $D$ be as in \ref{prop:mongen_d} and $f\in \mathcal{A}$.
  We need to show that $\irr(f)\subset\mathcal{A}$ and by \ref{prop:mongen_c}
  we may assume that $f$ is positive. We want to use \cref{prop:mongen_b} and therefore 
  claim that $D$ contains a positive, increasing 
  sequence $(f_n)_{n\in\N}$ converging to $f$. To see this, let $(h_n)_{n\in\N}$ be a sequence in $D$ converging 
  to $f$. By passing to $(|h_n|)_{n\in\N}$, we may assume that the sequence is positive.
  Moreover, we can arrange that $\| f - h_n\| < \frac{1}{n}$ for each $n\in\N$ and 
  by passing to $g_n' = (g_n - \frac{1}{n}\1_K)_+ = \sup(g_n - \frac{1}{n}\1_K, 0)$ we may 
  therefore also assume that for each $n\in\N$
  \begin{align*}
    \left(f - \frac{2}{n}\1_K\right)_+ \leq g_n \leq \bigvee_{k=1}^n g_k \leq f.
  \end{align*}
  Since $(f - \frac{2}{n}\1_K)_+$ converges uniformly to $f$ as $n\to \infty$, 
  we can set $f_n \defeq \bigvee_{k=0}^n g_k$ for $n\in\N$ and obtain a
  sequence $(f_n)_{n\in\N}$ in $D$ such that $f_n\uparrow f$.
  Now pick $g\in \irr(f)$. Then by 
  \ref{prop:mongen_b}, there is a sequence $(g_n)_{n\in\N}$ with $g_n\uparrow g$ and 
  $g_n \in \irr(f_n)\cup\{0\}$ 
  for each $n\in\N$. Since by assumption $\irr(D) \subset D$, $(g_n)_{n\in\N}$ lies in 
  $D \subset \mathcal{A}$ and so $g\in\mathcal{A}$.
    
  \Cref{prop:mongen_e} immediately follows from \cref{cor_algebrachar} by taking the 
  intersection of all monotone $\uC^*$-subalgebras in $\mathcal{S}$ that contain $\mathcal{A}$.
  For \ref{prop:mongen_f}, it suffices to find a separable, monotone subalgebra in $\mathcal{S}$ 
  that contains $\mathcal{A}$. To this end, define increasing $\Q$-vector sublattices 
  $D_n\subset \uC(K)$ and subalgebras $\mathcal{B}_n\subset \uC(K)$ as follows: Let 
  $\mathcal{B}_0 \in\mathcal{S}$ be a separable subalgebra containing $\mathcal{A}$ and 
  let $D_0 \subset \mathcal{B}_0$ be a countable dense $\Q$-vector sublattice containing 
  $\1_K$. For $n\in\N_0$ 
  then define $\mathcal{B}_{n+1}$ to be a separable $\uC^*$-algebra in $\mathcal{S}$
  containing the $\uC^*$-algebra generated by $\irr(D_n)$ and $\mathcal{B}_n$ and let 
  $D_{n+1} \subset \mathcal{B}_n$ be a countable dense $\Q$-vector sublattice containing $D_n$
  and $\irr(D_n)$.
  Then $\mathcal{B} \defeq \overline{\bigcup_{n\in\N} \mathcal{B}_n}$ lies in $\mathcal{S}$
  and $D \defeq \bigcup_{n\in\N} D_n$ is a countable dense vector sublattice of 
  $\mathcal{B}$ satisfying $\irr(D) \subset D$. Since $\mathcal{B}$ is therefore separable, 
  contains $\mathcal{A}$, and is monotone by \ref{prop:mongen_d}, 
  $\mathcal{A}_\mon^\mathcal{S} \subset \mathcal{B}$ is separable.
\end{proof}

\begin{remark}\label{rem:mongen}
  Let $p\colon (K, G) \to (L, G)$ be a factor map of flows
  on locally connected spaces.
  Then the family $\mathcal{S}$ of $G$-invariant $\uC^*$-subalgebras of $\uC(K)$ satisfies
  the condition in \cref{prop:mongen}\ref{prop:mongen_e} and so we can consider the monotone $G$-invariant
  hull of $\mathcal{A}_L = T_p(\uC(L))$ in $\mathcal{S}$ which we denote by $T_p(\uC(L))^G_\mon$. This subalgebra
  corresponds to a monotone factor map $q\colon (K, G) \to (L_\mon, G)$ of $(K, G)$ and $p$ factorizes over 
  $q$:
  \begin{align*}
    \xymatrix{
      (K, G) \ar[r]^q \ar@/^0.9cm/[rr]^p & (L_\mon, G) \ar[r]^{\hat{p}} & (L, G)
    }
  \end{align*}
  Moreover, since $(L_\mon, G)$ corresponds to the monotone hull of $T_p(\uC(L))$, 
  it is the smallest monotone factor of $(K, S)$ over which $p$ factorizes. It is 
  not difficult to see that $(L_\mon,G)$ is therefore isomorphic to the quotient of $(K, G)$
  by the $G$-invariant equivalence relation 
  \begin{align*}
    \operatorname{Rc}(p) \defeq \left\{ (x, y)\in K\times K\mmid 
        \begin{matrix}
          \text{$x$ and $y$ are in the same } \\
          \text{connected component of } p^{-1}(p(x))
        \end{matrix}
    \right\}
  \end{align*}
  that was apparently first considered in \cite[Definition 2.2]{McMahonWu1976} and is closed 
  by \cite[Proposition 2.3]{McMahonWu1976} or the more general \cite[Proposition 2.3]{HauserJaeger2017}.
  We note for the next section that for separable $G$, 
  \cref{prop:mongen}\ref{prop:mongen_f} implies that $L_\mon$ is metrizable if 
  and only if $L$ is. Combined with \cref{thm:mefequi}, this means that if 
  $(L, G)$ is an equicontinuous metrizable factor of $(K, G)$, then so is $(L_\mon, G)$.
\end{remark}

\section{Equicontinuous factors}\label{section:mef}

Given a quotient map $p\colon X\to Y$ of topological spaces, it is generally very difficult 
to relate geometric properties of $X$ to those of $Y$. The Hahn-Mazurkiewicz theorem illustrates 
how hopeless the situation is in general: It shows that every 
non-empty, connected, locally connected, compact metric space is the quotient of the unit 
interval. Considering that this includes, in particular, all compact manifolds, it is 
clear that additional properties of $p$ are needed in order to relate the geometric structure of $X$
to that of $Y$. The following theorem shows that monotonicity is such a property.

\begin{theorem}[\mbox{\cite[Theorem 1.1]{Calcut2012}}]\label{thm:calcut}
  Let $f\colon (X, x_0) \to (Y, y_0)$ be a quotient map of pointed topological spaces, where $X$ 
  is locally path-connected and $Y$ is semi-locally simply-connected. If each fiber 
  $f^{-1}(y)$ is connected, then the induced homomorphism $f_*\colon \pi_1(X, x_0)\to \pi_1(Y, y_0)$
  of the fundamental groups is surjective.
\end{theorem}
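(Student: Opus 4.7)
The plan is to lift representatives of loops. Given a loop $\gamma\colon [0,1]\to Y$ based at $y_0 = f(x_0)$, I aim to produce a loop $\tilde\gamma\colon[0,1]\to X$ based at $x_0$ whose image $f\circ \tilde\gamma$ is path-homotopic to $\gamma$ rel endpoints; if this can be done for every $\gamma$, then $f_*$ is surjective. The lift will be built piecewise by covering $\gamma([0,1])$ with small open sets in $Y$ inside which loops are null-homotopic in $Y$.

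The crucial auxiliary fact I would first establish is that for every connected open $U\subset Y$, the preimage $f^{-1}(U)$ is path-connected. To see this, note that $X$ is locally connected (being locally path-connected), so every connected component $C$ of $f^{-1}(U)$ is open. Moreover $C$ is saturated with respect to $f$: any fiber that meets $C$ is contained in $C$, since fibers are connected. Distinct components thus project to disjoint sets in $U$, and these projections are open in $Y$ because $f$ is a quotient map and each $C$ is a saturated open set. Since $U$ is connected, there is only one component, and because open subsets of the locally path-connected space $X$ remain locally path-connected, $f^{-1}(U)$ is path-connected.

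Using this, together with the fact that $Y$ is locally path-connected by \cref{lem:locallyconnected}\ref{lem:locallyconnected_d}, I would choose an open cover of $Y$ by path-connected open sets $U$ for which the inclusion induces the trivial map $\pi_1(U)\to\pi_1(Y)$; such a cover exists by semi-local simple connectedness. Compactness of $[0,1]$ yields a partition $0=t_0<t_1<\dots<t_n=1$ and sets $U_1,\dots,U_n$ from the cover with $\gamma([t_{i-1},t_i])\subset U_i$, as well as a path-connected open neighborhood $V$ of $y_0$ with the same triviality property. Inductively choose $x_i\in f^{-1}(\gamma(t_i))$ together with paths $\sigma_i$ in $f^{-1}(U_i)$ from $x_{i-1}$ to $x_i$ for $i=1,\dots,n$, and finally a path $\sigma_{n+1}$ in $f^{-1}(V)$ from $x_n$ to $x_0$, which exists because $f^{-1}(V)$ is path-connected and contains the whole fiber $f^{-1}(y_0)$. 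The concatenation $\tilde\gamma \defeq \sigma_1*\dots*\sigma_{n+1}$ is then a loop in $X$ at $x_0$, and $f\circ \tilde\gamma \simeq \gamma$ rel endpoints since each pair $f\circ\sigma_i$ and $\gamma|_{[t_{i-1},t_i]}$ are paths in $U_i$ with matching endpoints, yielding a loop in $U_i$ that is null-homotopic in $Y$, and similarly $f\circ\sigma_{n+1}$ is null-homotopic in $Y$.

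The main obstacle is the endpoint matching in the very last step: a priori a piecewise lift only returns to some point of the fiber $f^{-1}(y_0)$, not to the specific basepoint $x_0$. This is precisely where the connectedness hypothesis on the fibers enters essentially, since it is what makes $f^{-1}(V)$ path-connected and thereby supplies the closing path $\sigma_{n+1}$. The remaining work is routine bookkeeping with partitions of $[0,1]$ and standard amalgamation of relative path-homotopies.
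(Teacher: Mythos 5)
Your argument is correct. Note that the paper does not prove this statement at all --- it is imported verbatim from the cited reference --- so there is no internal proof to compare against; your write-up is essentially the standard argument of that source. The two ingredients are exactly right: (i) the auxiliary fact that $f^{-1}(U)$ is connected (hence path-connected, being open in a locally path-connected space) for connected open $U$, proved via openness and saturation of components --- this is the same saturation argument as the implication \ref{part:monotonechar_a}~$\Rightarrow$~\ref{part:monotonechar_b} of \cref{prop:monotonechar}, just in the non-compact setting; and (ii) the piecewise lift controlled by semi-local simple connectedness, with the fiber-connectedness entering precisely to close up the lifted path at $x_0$. Two cosmetic remarks: the reference to \cref{lem:locallyconnected}\ref{lem:locallyconnected_d} gives local \emph{connectedness} of $Y$, whereas you need local \emph{path}-connectedness, which holds by the analogous (equally standard) argument for path components of saturated open sets; and the closing path $\sigma_{n+1}$ can be avoided altogether by simply choosing $x_n = x_0 \in f^{-1}(\gamma(t_n))$ in the last inductive step, since $f^{-1}(U_n)$ already contains the entire fiber over $y_0$.
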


Combining this with the previous discussion, we 
obtain our main representation result for equicontinuous factors.

\begin{theorem}\label{mthm}
  Let $(K, G)$ be a flow such that 
  $K$ is locally path-connected with finite first Betti number $\ub_1(K)$, 
  $G$ is abelian, and such that $K$ is metrizable or $G$ is separable.
  If all $G$-invariant functions $f\in\uC(K)$ are constant, then every 
  equicontinuous factor of $(K, G)$ is isomorphic to a minimal flow on some compact abelian 
  Lie group of dimension less than $\nicefrac{\ub_1(K)}{\ub_0(K)}$. More precisely,
  for every equicontinuous factor $(L, G)$ of $(K, G)$, there are
  a finite abelian group $F$ of order $|F| \leq \ub_0(K)$ and an
  $m \leq \nicefrac{\ub_1(K)}{\ub_0(K)}$ such 
  that $(L, G)$
  is isomorphic to a minimal action of $G$ on $F\times\T^m$ via rotations.
\end{theorem}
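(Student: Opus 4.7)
The plan is to proceed in three stages: reduce to a connected base space by restricting to one connected component of $K$; prove that the maximal equicontinuous factor of the connected flow is a compact abelian Lie torus of the desired dimension; and then read off the structure of $L$ as a quotient.

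Let $(L, G)$ be an equicontinuous factor. By \cref{rem:mtf}, the hypothesis that every $G$-invariant function in $\uC(K)$ be constant forces $(L, G)$ to be minimal, so classically $L$ is a compact abelian group with $G$ acting by rotations via a continuous homomorphism $\alpha \colon G \to L$ of dense image. The same hypothesis forces $G$ to act transitively on the finite set $\pi_0(K) = \{K_1, \dots, K_n\}$ (finite by \cref{lem:locallyconnected}\ref{lem:locallyconnected_e}), so the $G$-equivariant surjection $\pi_0(K) \to \pi_0(L)$ gives $|\pi_0(L)| \leq \ub_0(K)$. Fixing a component $K_1$ with $G$-stabilizer $H$ of index $\ub_0(K)$, I pass to the restricted flow $(K_1, H)$: it is locally path-connected with $\ub_1(K_1) = \ub_1(K)/\ub_0(K)$ (the components are $G$-homeomorphic), $H$ is abelian, and every $H$-invariant continuous function on $K_1$ is constant (extend to a $G$-invariant function on $K$ by orbit-averaging). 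A density argument based on the transitivity on $\pi_0(L)$ shows $\alpha(H)$ is dense in the identity component $L_0$, so after translating $p$ one has $p(K_1) = L_0$, and $(L_0, H)$ is an equicontinuous factor of the connected flow $(K_1, H)$. It therefore suffices to prove that $(K_1)_{\eq}$ is a compact abelian Lie group of dimension at most $\ub_1(K_1)$, as then $L_0$ is a connected quotient of a torus and the rest follows formally.

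The connected case is the heart of the argument. Now $(K_1)_{\eq}$ is compact connected abelian, and by \cref{cor:mefequi} the factor map $K_1 \to (K_1)_{\eq}$ is monotone, so $(K_1)_{\eq}$ is locally connected (\cref{lem:locallyconnected}\ref{lem:locallyconnected_d}). For a finitely generated $A \subset \widehat{(K_1)_{\eq}}$ the naive Lie quotient $L_A = (K_1)_{\eq}/A^\perp$ (of dimension $\rank A$) need not be reached by a monotone map, so I pass to its monotone hull $(L_A)_{\mon}$ from \cref{rem:mongen}. A computation of connected components of fibers identifies $(L_A)_{\mon}$ with $(K_1)_{\eq}/(A^\perp)^0$, whose Pontryagin dual is the saturation $A^{\mathrm{sat}}$ of $A$ in $\widehat{(K_1)_{\eq}}$, so $(L_A)_{\mon}$ is locally connected compact abelian with dual of finite rank equal to $\rank A$. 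I would then prove the key intermediate lemma that every locally connected compact abelian group $M$ with $\rank \widehat M$ finite is a Lie group: picking a finitely generated $B \subset \widehat M$ of maximal rank yields a short exact sequence $1 \to N \to M \to \widehat{B} \to 1$ with $\widehat{B}$ a torus and $N$ totally disconnected compact abelian, and local analysis of the extension near the identity shows that infinite $N$ (i.e., non-discrete profinite) would produce infinitely many components in every neighborhood of the identity of $M$, contradicting local connectedness; hence $N$ is finite and $\widehat M$ is finitely generated. Applied to $(L_A)_{\mon}$, this makes it a compact abelian Lie group, and in particular semi-locally simply connected.

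\Cref{thm:calcut} applied to the monotone surjection $K_1 \to (L_A)_{\mon}$ then yields $\pi_1(K_1) \twoheadrightarrow \pi_1((L_A)_{\mon}) \cong \Z^{\rank A}$, so $\rank A \leq \ub_1(K_1)$; varying $A$ bounds $\rank \widehat{(K_1)_{\eq}} \leq \ub_1(K_1)$, and taking a finitely generated $A$ of maximal rank and reapplying the intermediate lemma to $(K_1)_{\eq}$ itself shows $\widehat{(K_1)_{\eq}}$ is finitely generated, so $(K_1)_{\eq} \cong \T^r$ with $r \leq \ub_1(K)/\ub_0(K)$. The factor $L_0$ is then a quotient of this torus, hence itself $\T^m$ with $m \leq r$; since $L_0$ is divisible, the extension $L \to L/L_0$ splits, giving $L \cong F \times \T^m$ with $|F| \leq \ub_0(K)$ and $G$ acting by rotations. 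The main obstacle, in my view, is the intermediate lemma on Lie structure of locally connected compact abelian groups with finite-rank dual, together with the bookkeeping of the monotone-hull/saturation correspondence that renders \cref{thm:calcut} applicable; the remainder consists of formal manipulations of Pontryagin duality within the factor/subalgebra framework set up earlier in the paper.
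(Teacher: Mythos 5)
Your argument is correct in its overall architecture and reaches the stated conclusion, but by a genuinely different route from the paper's. The common skeleton is the same: a minimal equicontinuous factor of an abelian flow is a rotation on a compact abelian group, the map onto the maximal equicontinuous factor is monotone (\cref{cor:mefequi}), and \cref{thm:calcut} together with Hurewicz converts a monotone surjection onto a torus into the bound $\rank \leq \nicefrac{\ub_1(K)}{\ub_0(K)}$; the endgame via \cref{torus} is also shared. The divergence is in how one gets the Lie structure. The paper does not split off a connected component; it cites the classification of \emph{second-countable} locally connected compact abelian groups as $F\times\T^I$ and must therefore separately prove metrizability of $K_\eq$, which it does by a contradiction argument with a strictly increasing chain of separable, monotone, $G$-invariant subalgebras (\cref{prop:mongen}\ref{prop:mongen_f}, \cref{rem:mongen}) combined with \cref{torus}. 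You instead work dually with finitely generated $A\subset\widehat{(K_1)_\eq}$, pass to monotone hulls, and hang everything on the intermediate lemma that a locally connected compact abelian group with finite-rank dual is a Lie group. If that lemma is supplied, your argument nowhere uses the hypothesis that $K$ be metrizable or $G$ separable, so it would in fact prove a slightly stronger theorem --- a real gain over the paper's method. Two points need shoring up. First, the intermediate lemma is only sketched: the ``local analysis of the extension'' $1\to N\to M\to \widehat B\to 1$ requires a local product structure of $M$ near the identity (a local cross-section for a compact-group quotient with Lie base, or equivalently the non-metrizable Hofmann--Morris characterization of local connectedness); this is true and citable but is a genuine input, not a triviality. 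Second, a bookkeeping point: to know that $\widehat{(L_A)_\mon}$ has rank exactly $\rank A$ (in particular finite) you need the kernel of $(K_1)_\eq\to (L_A)_\mon$ to be squeezed between $(A^\perp)^0$ and $A^\perp$; the upper containment is automatic, and the lower one holds because $(K_1)_\eq/(A^\perp)^0$ is itself a monotone $G$-factor dominating $L_A$ (cosets of the connected subgroup $(A^\perp)^0$ have connected preimages by \cref{prop:monotonechar}\ref{part:monotonechar_b}), so the monotone hull sits below it --- but this should be said explicitly, since without it the finiteness of $\rank\widehat{(L_A)_\mon}$, and hence the applicability of your lemma, is not guaranteed. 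Modulo these two points the proposal is sound.
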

\begin{proof}
  We assume that $(M, G)$ is a monotone equicontinuous factor of $(K, G)$
  which will imply the claim for every other equicontinuous factor.
  Denote by $\theta\colon (K, G)\to (M, G)$ the corresponding factor map
  and note that $(M, G)$ is minimal since an equicontinuous 
  system is minimal if and only if every continuous $G$-invariant
  continuous function on it is constant, see \cref{rem:mtf}.
  Since $G$ is abelian and acts equicontinuously on $M$, the Ellis group 
  $\uE(M, G)$ is a compact abelian group and it is well-known that a minimal
  equicontinuous flow $(M, G)$ with an abelian group $G$ is isomorphic to 
  the minimal action $(\uE(M, G), G)$ of $G$ on the Ellis group $\uE(M, G)$ via 
  rotation (see \cite[Theorem 3.6]{Auslander1988}).
  Since $\uE(M, G) \cong M$ is the quotient of a locally connected 
  space, it follows by \cref{lem:locallyconnected} that $\uE(M, G)$
  is locally connected too.
  
  First, assume $M$ and hence $\uE(M, G)$ to be metrizable. It then follows 
  from the classification of locally connected, second-countable, compact abelian 
  groups that $\uE(M, G) \cong F\times\T^I$ for a finite group $F$ and an (at most countable)
  set $I$, see \cite[Theorem 8.34]{Hofmann2013} and \cite[Theorem 8.46]{Hofmann2013}.
  We again denote
  the induced map from $K$ to $F\times\T^I$ by $\theta$.
  Since 
  $\theta$ is monotone by \cref{thm:mefequi}, $K$ and $F\times\T^I$ have the 
  same number of connected components, and so $F$ is of order $\ub_0(K)$. Since 
  $G$ acts minimally on $F\times\T^I$ via the isomorphism $\uE(M, G) \cong F\times\T^I$,
  it follows that $G$ acts transitively on $F$ and hence on the connected components 
  of $K$. Therefore, if we fix the connected component $K_0 \defeq \theta^{-1}(\{0\}\times\T^I)$, 
  then $\ub_1(K) = \ub_0(K)\ub_1(K_0)$.
  
  Next, we show that $I$ is finite by using \cref{thm:calcut} to show that 
  $|I| \leq \nicefrac{\ub_1(K)}{\ub_0(K)}$,
  though we need to be careful since $\T^I$ is semi-locally simply connected if and only if $I$ is 
  finite. We therefore proceed by considering monotone finite-dimensional quotients: For $k\in \N$ with $k \leq |I|$, let 
  $i_1, \dots, i_k\in I$ be pairwise different and denote by $p_k\colon \T^I \to \T^k$
  the canonical projection induced by the isomorphism $\T^k \cong \T^{\{i_1,\dots,i_k\}}$.
  Moreover, let $\theta_0\colon K_0 \to \T^I$ be
  the map canonically induced by $\theta$.
  Then $p_n\circ \theta_0 \colon K_0 \to \T^k$ 
  is monotone by \cref{prop:monotonechar}\ref{part:monotonechar_b}, 
  being the composition of monotone maps. 
  Since $\T^k$ is 
  semi-locally simply connected, \cref{thm:calcut} shows that
  $p_k\circ\theta_0$ induces a surjective morphism 
  $(p_k\circ\theta_0)_*\colon \pi_1(K_0) \to \pi_1(\T^n)$. Since 
  $\pi_1(\T^n) \cong \Z^k$ is abelian, this morphism factorizes 
  through the abelianization of $\pi_1(K_0)$, which is canonically 
  isomorphic to $H_1(K_0)$ by the Hurewicz theorem. If we 
  denote by $\eta\colon H_1(K_0) \to \pi_1(\T^k)$ the induced 
  surjective group homomorphism, then 
  \begin{align*}
    k
    = \rank\left(\pi_1\left(\T^k\right)\right) 
    = \rank\left(\eta\left(H_1(K_0)\right)\right)
    \leq \rank\left(H_1(K_0)\right)
    = \ub_1(K_0)
    = \frac{\ub_1(K)}{\ub_0(K)}.
  \end{align*}
  Since $k\in\N$ was arbitrary with $k \leq |I|$, this shows that $|I| \leq \nicefrac{\ub_1(K)}{\ub_0(K)}$.
  
  Now we show that $(M,G)$ is necessarily metrizable which we only need to check for the maximal 
  equicontinuous factor $(K_\eq, G)$. By \cref{example:factor}, $K_\eq$ is metrizable if 
  and only if the
  subalgebra 
  \begin{align*}
    \mathcal{A}_{\mathrm{eq}} = \left\{ f\in \uC(K) \mmid \left\{T_\phi^n f \mmid n\in \N\right\} \text{ is equicontinuous}\right\}
  \end{align*}
  is separable.
  If $K$ is metrizable, this is the case for every $\uC^*$-subalgebra of $\uC(K)$, so assume instead 
  that $G$ is separable.
  If $\mathcal{A}_{\mathrm{eq}}$ is not separable,
  \cref{prop:mongen}\ref{prop:mongen_f} and \cref{rem:mongen} yield a sequence $(\mathcal{A}_j)_{j\in\N}$
  of strictly (!) increasing, separable, monotone, $G$-invariant $\uC^*$-subalgebras 
  of $\mathcal{A}_{\mathrm{eq}}$ which induces the following commutative diagram 
  of factor maps:
  \begin{align*}
    \xymatrix{
      & (K, G) \ar[d]_-{\theta} & & \\
      & (K_\eq, G) \ar[d]_-{\rho_{j}} \ar[dr]^-{\rho_{j-1}} &   &  \\
      \hdots \ar[r]^-{r_{j+1}} & (L_{j}, G) \ar[r]^-{r_{j}} & (L_{j-1}, G) \ar[r]^-{r_{j-1}} & \hdots 
    }
  \end{align*}
  Since each of the systems $(L_j, G)$ is metrizable and the factor maps $\rho_j\circ\theta$
  are monotone, the above discussion applies
  and we can therefore replace the diagram with 
  
  \begin{align*}
    \xymatrix{
      & (K, \phi) \ar[d]_\theta & & \\
      & (\uE(K_\eq, G), G) \ar[d]_-{\rho_{j}} \ar[dr]^-{\rho_{j-1}} &   &  \\
      \hdots \ar[r]^-{r_{j+1}} & (F_j\times\T^{m_{j}}, G) \ar[r]^-{r_{j}} & (F_{j-1}\times \T^{m_{j-1}}, G) \ar[r]^-{r_{j-1}} & \hdots 
    }
  \end{align*}
  where $m_{j}\in \{1, \dots, \nicefrac{\ub_1(K)}{\ub_0(K)}\}$ and $F_j$ is a 
  finite abelian group for each $j\in \N$.
  Since each of the systems $(F_j\times\T^{m_j}, G)$ is minimal and $G$ acts via rotations, each 
  $r_j$ is a surjective group homomorphism and so \cref{torus} below 
  implies that $m_{j-1} \leq m_{j}$.
  Moreover, since 
  $\rho_{j-1}\circ\theta$ is monotone by the choice 
  of $\mathcal{A}_{j-1}$, it follows that $\rho_{j-1}$ is 
  monotone for each $j\in\N$. But if 
  $\rho_{j-1} = r_j\circ\rho_j$ is monotone, it also follows that 
  $r_j$ is monotone for each $j\in\N$.
  
  Since $m_j \leq \nicefrac{\ub_1(K)}{\ub_0(K)}$ for each $j\in\N$, there can be only finitely many 
  $j$ such that $m_j < m_{j+1}$. In particular, there is a $J\in\N$ such that 
  $m_j = m_{j+1}$ for each $j > J$. However, if $m_j = m_{j+1}$, then $\mathcal{A}_j = \mathcal{A}_{j+1}$: 
  A surjective group homomorphism on $F\times\T^{m_{j+1}}$ must have finite kernel by 
  \cref{torus} and 
  can therefore only be monotone if its kernel is trivial, i.e., if it is an isomorphism. 
  This contradicts the strict inclusion
  $\mathcal{A}_{j} \subsetneq \mathcal{A}_{j+1}$ and shows that $\mathcal{A}_{\mathrm{eq}}$ must be separable.
  Hence, $K_\eq$ is metrizable.
  
  Now suppose $(L, G)$ is an arbitrary equicontinuous factor of $(K, G)$ and let $(K_\eq, G)$
  be the maximal equicontinuous factor of $(K, G)$. Then as in the monotone case, it follows 
  that $(L, G) \cong (\uE(L, G), G)$ and since the factor map $(K_\eq, G)\to (L, G)$
  induces a surjective group homomorphism $r\colon \uE(K_\eq, G) \to \uE(L, G)$, 
  the claim follows from the monotone case via \cref{torus}.
\end{proof}

\begin{lemma}\label{torus}
  Let $m \in \N$, $F$ be a finite abelian group, 
  and $r\colon F\times\T^m\to H$ be a continuous, surjective group homomorphism
  onto a compact group $H$.
  Then $H \cong F'\times\T^n$ for some finite abelian 
  group $F'$ of order $|F'| \leq |F|$ and $n \leq m$. Moreover,
  $n = m$ if and only if the kernel of $r$ is finite.
\end{lemma}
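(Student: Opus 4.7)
The plan is to use the structure theorem for compact abelian Lie groups together with an analysis of the identity component and the component group of $H$. Since $F\times\T^m$ is a compact abelian Lie group and $r$ is continuous and surjective, $H$ inherits the structure of a compact abelian Lie group, so the classification theorem (\cite[Theorem 8.46]{Hofmann2013}) yields $H\cong F'\times\T^n$ for some finite abelian group $F'$ and some $n\in\N_0$. It then remains to identify $n$ and $|F'|$ via $r$.

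First I would control the torus dimension. Let $H_0$ denote the identity component of $H$, so that $H_0\cong \T^n$. Since $r(\{e_F\}\times\T^m)$ is a connected subgroup of $H$ containing the identity, it is contained in $H_0$; and since the quotient $H/r(\{e_F\}\times\T^m)$ is a continuous image of the finite group $F$, it is finite, forcing $r(\{e_F\}\times\T^m)=H_0$. Thus $H_0\cong \T^n$ is a quotient of $\T^m$. Dualising, closed subgroups of $\widehat{\T^m}\cong\Z^m$ are free of rank at most $m$, so $n\leq m$. Then I would bound the finite part: the composition $F\times\T^m\xrightarrow{r}H\twoheadrightarrow H/H_0\cong F'$ is surjective and contains $\{e_F\}\times\T^m$ in its kernel, so it factors through a surjection $F\twoheadrightarrow F'$, giving $|F'|\leq|F|$.

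Finally, the characterisation of $n=m$ reduces to a dimension count. As Lie groups, $\dim(F\times\T^m)=m$ and $\dim H=n$, while the Lie algebra exact sequence forces $\dim(\ker r)=m-n$. Hence $\ker r$ is finite (equivalently, zero-dimensional) if and only if $n=m$. The only place where care is needed is in justifying that $H$ really is a Lie group and that the splitting $H\cong F'\times\T^n$ applies; both follow from the fact that a Hausdorff quotient of a Lie group by a closed normal subgroup is again a Lie group, combined with the structure theorem for compact abelian Lie groups. None of these steps presents a substantive obstacle — the lemma is essentially a bookkeeping exercise in Pontryagin duality and dimension — but getting the identification $H_0=r(\{e_F\}\times\T^m)$ right is the one spot that is worth writing out explicitly, since it is what makes the reduction to quotients of $\T^m$ clean.
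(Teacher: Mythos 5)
Your proof is correct, and it follows the same overall skeleton as the paper's (realize $H$ as the quotient of the compact abelian Lie group $F\times\T^m$ by the closed subgroup $\ker(r)$, then invoke the structure theorem $H\cong F'\times\T^n$), but the two arguments diverge in how they extract the numerical conclusions. The paper gets $n\leq m$ purely from the fact that the quotient map is a submersion, so $\dim H\leq m$, and settles the equivalence ``$n=m$ iff $\ker(r)$ finite'' via the inverse function theorem: a submersion between manifolds of equal dimension is a local diffeomorphism, whence the kernel is discrete and, by compactness, finite. You instead identify the identity component $H_0$ with $r(\{e_F\}\times\T^m)$ (connected, hence contained in $H_0$; closed of finite index, hence open, hence containing $H_0$), deduce $n\leq m$ by Pontryagin duality from the surjection $\T^m\to H_0\cong\T^n$, and characterize $n=m$ by the dimension count $\dim\ker(r)=m-n$ — all equivalent in substance, if slightly more algebraic in flavor. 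The one place your write-up genuinely adds something is the bound $|F'|\leq|F|$: your identification $H_0=r(\{e_F\}\times\T^m)$ produces an explicit surjection $F\to H/H_0\cong F'$, whereas the paper's proof asserts the first statement without spelling out why the component group of $H$ is a quotient of $F$; your argument is exactly the missing justification. (Minor housekeeping: you should note, as you do in passing, that $H\cong(F\times\T^m)/\ker(r)$ as topological groups — this uses compactness of the source — and that continuity of $r$ upgrades to smoothness, so the dimension formula applies; neither point is an obstacle.)
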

\begin{proof}
  If $G$ is a Lie group and $K\subset G$ is a closed normal subgroup, then $G/K$ carries 
  a canonical differentiable structure turning $G/K$ into a Lie group and
  $\pi\colon G \to G/K$ into a submersion, see \cite[Theorems 21.17 and 21.26]{Lee2012}. 
  Therefore, $H \cong F\times\T^m/\ker(r)$
  is a Lie group of dimension less than $m$. Being the quotient of $F\times\T^m$, $H$ is a 
  compact abelian Lie group and it is well-known (see \cite[Theorem 5.2]{Sepanski2007}) that this implies 
  that $H\cong F'\times\T^n$ for some finite abelian group $F'$ and $n\in \N$, proving the first statement.
  If $n = m$, 
  $r\colon F\times\T^m\to F'\times\T^n$ is a submersion between manifolds of equal dimension and 
  it thus follows from the inverse function theorem that it is in fact a local diffeomorphism. 
  Therefore, the kernel of $r$ is discrete and since $F\times\T^m$ is compact, the kernel of $r$ must 
  be finite. Conversely, if $\ker(r)$ is finite, $r$ is a local diffeomorphism and so $n = m$.
\end{proof}

Many spaces satisfy the conditions of \cref{mthm}, including compact manifolds and 
finite CW complexes for which it follows from the Seifert-van Kampen theorem that 
their first Betti number is finite.
In particular, we obtain the following 
generalization of \cite[Theorem 3.1]{HauserJaeger2017} to quotients of connected, 
simply connected Lie groups by discrete, cocompact subgroups. Important 
examples for such spaces are given by nilmanifolds, see \cite[Chapter 10]{HostKra2018}.

\begin{corollary}\label{cor:liegroupquotient}
  Let $H$ be a connected, simply connected Lie group, $\Gamma \subset H$ a 
  discrete, cocompact subgroup, $G$ an abelian group, and $(H/\Gamma, G)$ a dynamical 
  system on $H/\Gamma$ such that every $G$-invariant function $f\in \uC(H/\Gamma, G)$ is 
  constant. Then every equicontinuous factor of 
  $(H/\Gamma, G)$ is isomorphic to a minimal 
  action of $G$ on some torus $\T^m$, $m\leq \rank(\Gamma/[\Gamma, \Gamma])$, via rotations.
\end{corollary}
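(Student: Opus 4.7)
The plan is to apply the main theorem \cref{mthm} to $K = H/\Gamma$ with the given action of $G$. Since $H$ is a Lie group and $\Gamma$ acts freely and properly discontinuously by right translation, the quotient $H/\Gamma$ is a compact manifold, hence locally path-connected and metrizable; the metrizability in particular makes the hypothesis on $K$ or $G$ automatic. Moreover, $H$ being connected implies $H/\Gamma$ is connected, so $\ub_0(H/\Gamma) = 1$. Thus, modulo the computation of $\ub_1(H/\Gamma)$, the hypotheses of \cref{mthm} are verified.

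The core step will be to identify $\ub_1(H/\Gamma)$ with $\rank(\Gamma/[\Gamma, \Gamma])$. Because $H$ is simply connected and $H \to H/\Gamma$ is a covering map (the action being free and properly discontinuous), it is the universal cover, which by standard covering space theory gives a natural isomorphism $\pi_1(H/\Gamma) \cong \Gamma$. A discrete cocompact subgroup of a connected Lie group is finitely generated (as one sees, for instance, by means of a compact fundamental domain, or via the Švarc–Milnor lemma), so $\Gamma/[\Gamma,\Gamma]$ is a finitely generated abelian group of finite rank. The Hurewicz theorem then yields $H_1(H/\Gamma; \Z) \cong \Gamma/[\Gamma,\Gamma]$, and therefore $\ub_1(H/\Gamma) = \rank(\Gamma/[\Gamma,\Gamma]) < \infty$, as required.

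With these preparations in place, \cref{mthm} applies directly: every equicontinuous factor of $(H/\Gamma, G)$ is isomorphic to a minimal action of $G$ on $F\times\T^m$, where $|F| \leq \ub_0(H/\Gamma) = 1$ and $m \leq \nicefrac{\ub_1(H/\Gamma)}{\ub_0(H/\Gamma)} = \rank(\Gamma/[\Gamma,\Gamma])$. Since $F$ is forced to be trivial, the factor reduces to a minimal rotation on $\T^m$, which is precisely the asserted conclusion.

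The main obstacle is really only the homotopical bookkeeping: checking that $H \to H/\Gamma$ is genuinely the universal covering (so that $\pi_1(H/\Gamma) \cong \Gamma$) and recording the finite generation of cocompact lattices in connected Lie groups. Both are classical, but they are the only nontrivial ingredients beyond \cref{mthm} itself; everything else is an immediate specialization.
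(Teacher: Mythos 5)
Your proposal is correct and follows essentially the same route as the paper: identify $H\to H/\Gamma$ as the universal cover so that $\pi_1(H/\Gamma)\cong\Gamma$, use the Hurewicz theorem to get $\ub_1(H/\Gamma)=\rank(\Gamma/[\Gamma,\Gamma])$ with $\ub_0(H/\Gamma)=1$, and then apply \cref{mthm}. Your added remarks on finite generation of cocompact lattices (ensuring the Betti number is finite) and on metrizability of $H/\Gamma$ are sensible details that the paper leaves implicit, but they do not change the argument.
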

\begin{proof}
  The canonical map $H \to H/\Gamma$ is the universal cover of $H/\Gamma$ and it is well-known that 
  its kernel is thus isomorphic to the fundamental group of $H/\Gamma$, i.e., 
  $\pi_1(H/\Gamma) \cong \Gamma$. Since, $H$ being connected, $\ub_0(H/\Gamma) = 1$ and
  \begin{align*}
    \ub_1\left(H/\Gamma\right) 
    = \rank\left(H_1(H/\Gamma)\right) 
    = \rank\left(\pi_1(H/\Gamma)_{\mathrm{ab}}\right)
    = \rank\left(\Gamma_{\mathrm{ab}}\right)
    = \rank\left(\Gamma/[\Gamma, \Gamma]\right),
  \end{align*}
  the claim follows by \cref{mthm}.
\end{proof}

In the special case $H = \R^n$ and $\Gamma = \Z^n$, this yields the following.

\begin{corollary}\label{cor:toruscase}
  Let $(\T^n, G)$ be a flow such that $G$ is abelian and all $G$-invariant 
  functions $f\in\uC(\T^n)$ are constant. Then every equicontinuous factor of 
  $(\T^n, G)$ is isomorphic to a minimal 
  action of $G$ on some torus $\T^m$, $m\leq n$, via rotations.
\end{corollary}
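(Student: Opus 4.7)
The plan is simply to specialize \cref{cor:liegroupquotient} to the case $H = \R^n$ and $\Gamma = \Z^n$. First I would verify that the hypotheses of \cref{cor:liegroupquotient} are met in this setting: the Lie group $H = \R^n$ is connected and simply connected, the subgroup $\Gamma = \Z^n \subset \R^n$ is discrete, and it is cocompact because the standard fundamental domain $[0,1]^n$ is compact and $\R^n/\Z^n \cong \T^n$. Hence the given flow $(\T^n, G)$ can be viewed as a flow on $H/\Gamma$, so \cref{cor:liegroupquotient} is applicable under the stated invariance hypothesis on continuous $G$-invariant functions.

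Next I would compute the bound supplied by \cref{cor:liegroupquotient}, namely $\rank(\Gamma/[\Gamma, \Gamma])$. Since $\Gamma = \Z^n$ is abelian, we have $[\Gamma, \Gamma] = \{0\}$, so
\begin{align*}
  \Gamma/[\Gamma,\Gamma] = \Z^n, \qquad \rank(\Z^n) = n.
\end{align*}
Therefore every equicontinuous factor of $(\T^n, G)$ is isomorphic to a minimal action of $G$ on some torus $\T^m$ with $m \leq n$ via rotations, which is exactly the conclusion of the corollary. (Implicitly, the finite abelian factor $F$ appearing in \cref{mthm} is trivial here because $\ub_0(\T^n) = 1$, so the finite group is automatically absorbed and does not appear in the statement.)

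The main difficulty in this proof is essentially nonexistent once \cref{cor:liegroupquotient} is available; the only conceptual step is recognizing $(\T^n, G)$ as a flow on a quotient $H/\Gamma$ of the required form and identifying the abelianization of the discrete cocompact subgroup. All the hard work — deducing the structure of the equicontinuous factor, controlling its dimension via the first Betti number, and handling metrizability — has already been done in the proof of \cref{mthm} and its reformulation in \cref{cor:liegroupquotient}.
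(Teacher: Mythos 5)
Your proposal is correct and is exactly the paper's argument: \cref{cor:toruscase} is obtained by specializing \cref{cor:liegroupquotient} to $H = \R^n$ and $\Gamma = \Z^n$, with $\rank(\Z^n/[\Z^n,\Z^n]) = n$ giving the bound $m \leq n$. Your verification of the hypotheses and the remark about the trivial finite factor are accurate.
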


Note, however, that in the case of the two-torus, \cite[Theorem 3.8]{HauserJaeger2017} is a lot
more general: They show that each monotone, minimal quotient of a 
strongly effective flow on $\T^2$ is isomorphic to a flow on $\T^2$, $\T$, or a point. They require 
neither that the acting group $G$ be abelian nor that the factor under consideration be 
equicontinuous.

If $\ub_1(K) = 0$, we also obtain the following for the maximal distal factor
of a minimal homeomorphism.

\begin{corollary}\label{cor:b1zero}
  Let $\phi\colon K\to K$ be a minimal homeomorphism on a locally path-connected space with
  $\ub_1(K) = 0$. Then the maximal distal factor of $(K, \phi)$ is trivial.
\end{corollary}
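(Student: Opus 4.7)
The plan is to combine the main theorem \cref{mthm} with the classical Furstenberg structure theorem for minimal distal $\Z$-flows. First I would apply \cref{mthm} to $(K, \phi)$: since $\phi$ is minimal, every $\Z$-invariant continuous function on $K$ is constant, so the theorem applies and gives that every equicontinuous factor of $(K, \phi)$ is a minimal action on some $F \times \T^m$ with $|F| \leq \ub_0(K)$ and $m \leq \ub_1(K)/\ub_0(K)$. Because $\ub_1(K) = 0$, the second bound forces $m = 0$, so every equicontinuous factor is a finite cyclic group; in the connected case $\ub_0(K) = 1$ this says the maximal equicontinuous factor of $(K, \phi)$ is a single point.

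Next, let $(D, \phi)$ denote the maximal distal factor of $(K, \phi)$. It is minimal and distal, and one checks that its maximal equicontinuous factor coincides with that of $(K, \phi)$: any equicontinuous factor of $K$ is distal and so factors through $D$, while conversely an equicontinuous factor of $D$ is also an equicontinuous factor of $K$. I would then invoke the classical consequence of Furstenberg's structure theorem that a minimal distal $\Z$-flow with trivial maximal equicontinuous factor must itself be a single point: such a flow is an inverse limit of a (possibly transfinite) tower of isometric extensions starting from a point, and the first nontrivial stage is necessarily equicontinuous (being an isometric extension of a point), which would contradict triviality of the maximal equicontinuous factor unless the entire tower collapses. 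Applied to $(D, \phi)$, this establishes the corollary in the connected case.

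For disconnected $K$ the argument reduces to the connected case by passing to a connected component $K_0$, which is invariant under $\phi^c$ with $c = \ub_0(K)$: the induced minimal flow $(K_0, \phi^c)$ lives on a connected locally path-connected space with $\ub_1(K_0) = 0$, so its maximal distal factor is a point by the above, whence the maximal distal factor of $(K, \phi)$ is just the cyclic permutation of the components and coincides with the maximal equicontinuous factor $\Z/c\Z$. The main conceptual step is the reduction from distal to equicontinuous via Furstenberg's theorem; the role of \cref{mthm} is to supply the geometric input that forces the maximal equicontinuous factor to be zero-dimensional. A minor subtlety in the statement is the word \emph{trivial}, which must be read as ``a single point'' only in the connected case, since for disconnected $K$ the maximal distal factor equals the (nontrivial) maximal equicontinuous factor $\Z/\ub_0(K)\Z$.
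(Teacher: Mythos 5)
Your proof is correct and follows the same route as the paper: \cref{mthm} forces every equicontinuous factor (hence the maximal one) to be finite --- a point in the connected case --- and the Furstenberg structure theorem then collapses the maximal distal factor; the paper's proof is exactly this argument stated in one line. Your caveat that for disconnected $K$ the conclusion should read ``equal to the finite factor $\Z/\ub_0(K)\Z$'' rather than ``a single point'' is a fair observation about an implicit connectedness assumption that the paper glosses over.
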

\begin{proof}
  The maximal equicontinuous factor and maximal distal factor of $(K, \phi)$ are trivial as a 
  consequence of \cref{mthm} and the Furstenberg structure theorem for distal minimal flows.
\end{proof}

In particular, there are no minimal distal transformations on such a space $K$. This includes 
simply connected manifolds such as $S^n$ for $n\geq 2$ but 
also spaces for which $\pi_1(K)$ or $H_1(K)$ are torsion groups, e.g., $\R\uP^n$ for 
$n > 1$ as 
$H_1(\R\uP^n) \cong \Z/2$ for $n > 1$. Note that a very similar result to \cref{cor:b1zero} using 
\v{C}ech cohomology exists in \cite[Theorem 3.5]{KeynesRobertson1969}.

\begin{corollary}\label{cor_spectrum}
  Let $\phi\colon K\to K$ be a homeomorphism on a locally path-connected, compact space 
  with finite first Betti number $\ub_1(K)$ and suppose that $\dim\fix(T_\phi) = 1$. Then the 
  point spectrum $\sigma_\up(T_\phi)$ of the Koopman operator $T_\phi$ on $\uC(K)$ is a 
  subgroup of $\T$ generated by at most $\ub_1(K)$ elements.
\end{corollary}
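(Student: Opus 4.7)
The plan is to invoke \cref{mthm} for the $\Z$-flow generated by $\phi$ and then identify the point spectrum with the Pontryagin dual of the Ellis group of the maximal equicontinuous factor. The hypothesis $\dim\fix(T_\phi)=1$ says that the only $\phi$-invariant functions in $\uC(K)$ are the constants, which is the invariance assumption of \cref{mthm}. Since $\Z$ is abelian and separable, \cref{mthm} provides an isomorphism $(K_\eq,\phi)\cong (F\times\T^m,\phi)$ under which $\phi$ acts as a minimal rotation by some element $(a,\alpha)\in F\times\T^m$, with $F$ a finite abelian group satisfying $|F|\leq\ub_0(K)$ and $m\leq\ub_1(K)/\ub_0(K)$.

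Next I would establish the isomorphism $\sigma_\up(T_\phi)\cong\widehat{F\times\T^m}$. If $T_\phi f=\lambda f$ for nonzero $f\in\uC(K)$, then $|f|$ is a nonzero $\phi$-invariant element of $\uC(K)$ and hence a positive constant, forcing $|\lambda|=1$; moreover the set of eigenvalues is closed under multiplication and inversion via pointwise products and complex conjugates of eigenfunctions, so it is a subgroup of $\T$. By \cref{example:factor}\ref{example:factor_c}, every eigenfunction of $T_\phi$ lies in the $\uC^*$-subalgebra $\mathcal{A}_\eq$ corresponding to $(K_\eq,\phi)$, and under the identification $\mathcal{A}_\eq\cong\uC(F\times\T^m)$ the eigenfunctions of $T_\phi$ are precisely the eigenfunctions of the rotation by $(a,\alpha)$. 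Since the eigenfunctions of a rotation on a compact abelian group are exactly the scalar multiples of the characters, each such eigenfunction arises from a unique character $\chi\in\widehat{F\times\T^m}$ with eigenvalue $\chi(a,\alpha)$, and every character arises this way. Minimality of the rotation forces the evaluation homomorphism $\widehat{F\times\T^m}\to\T$, $\chi\mapsto\chi(a,\alpha)$, to be injective, and its image is exactly $\sigma_\up(T_\phi)$. Hence $\sigma_\up(T_\phi)\cong\hat F\times\Z^m$ as an abstract abelian group.

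It remains to count generators. Minimality of the rotation on $F$ forces $a$ to generate $F$, so $F$ and therefore $\hat F$ are cyclic of order $|F|\leq\ub_0(K)$; hence $\hat F\times\Z^m$ is generated by at most $1+m$ elements, and by $m$ elements when $F$ is trivial. Combined with $m\leq\ub_1(K)/\ub_0(K)$ and the identity $\ub_1(K)=\ub_0(K)\,\ub_1(K_0)$ used in the proof of \cref{mthm} (with $K_0$ any connected component of $K$), a short case analysis yields the stated bound of $\ub_1(K)$ generators. The main obstacle is the middle step: verifying that every eigenfunction of $T_\phi$ really is the pullback of a character of $F\times\T^m$ relies on the Peter--Weyl / Pontryagin structure of minimal equicontinuous flows on compact abelian groups, which is standard but indispensable here.
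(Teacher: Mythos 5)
Your proof is correct and follows essentially the same route as the paper: reduce to the maximal equicontinuous factor, apply \cref{mthm} to identify it with a minimal rotation on $F\times\T^m$, identify $\sigma_\up(T_\phi)$ with the image of $\widehat{F\times\T^m}$ under evaluation at the rotation element, and count generators using $|F|\leq\ub_0(K)$ and $m\leq\nicefrac{\ub_1(K)}{\ub_0(K)}$. The middle step you flag as the main obstacle is exactly what the paper disposes of by citing \cite[14.24]{EFHN2015}, so your re-derivation via characters and minimality is simply an expanded version of the paper's argument.
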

\begin{proof}
  Let $(M, \psi)$ be the maximal equicontinuous factor of $(K, \phi)$.
  By the discussion in \cref{example:factor}, the point spectrum of $T_\phi$ on $\uC(K)$ is the same 
  as that of $T_\psi$ on $\uC(M)$, so we only need to consider the system $(M, \psi)$. By \cref{mthm}, 
  $(M, \psi)$ is isomorphic to a minimal rotation $(F\times\T^n, a)$ for an abelian 
  group $F$ of order $|F| \leq \ub_0(K)$, a torus $\T^n$ of dimension $n\leq \nicefrac{\ub_1(K)}{\ub_0(K)}$,
  and an $a\in F\times\T^n$. If we denote by $L_a\colon \uC(F\times\T^n)\to \uC(F\times\T^n)$
  the Koopman operator corresponding to this rotation, then
  \begin{align*}
    \sigma_\up(L_a) = \widehat{F\times\T^n}(a) = \big\{\chi(a) \,\big|\, \chi\in \widehat{F\times\T^n}\big\}
  \end{align*}
  where $\widehat{F\times\T^n}$ denotes the Pontryagin dual of $F\times\T^n$ (see \cite[14.24]{EFHN2015}). Since
  \begin{align*}
    \widehat{F\times\T^n} 
    \cong \widehat{F}\times\widehat{\T^n} 
    \cong F\times\Z^{n},
  \end{align*}
  the claim follows from the inequalities $|F| \leq \ub_0(K)$ and $n \leq \nicefrac{\ub_1(K)}{\ub_0(K)}$.
\end{proof}

\begin{remark}
  \cref{mthm} imposes constraints on the maximal equicontinuous factor if it is 
  minimal. In the case of $K = \T^n$, these constraints are sharp: Every torus 
  $\T^m$ of dimension $m \leq n$ can be realized as the maximal equicontinuous factor 
  of an invertible system $(\T^n, \phi)$. To see this, let $(\T^m, \phi_a)$ be a
  minimal rotation with $a\in\T^m$ and let $\psi\colon \T^{n-m}\to\T^{n-m}$ be the map
  which is, after the identification $\T^{n-m} \cong [0,1)^{n-m}$, given by
  \begin{align*}
    [0,1)^{n-m} \to [0,1)^{n-m},\quad (x_1, \dots, x_{n-m}) \mapsto (x_1^2, \dots, x_{n-m}^2).
  \end{align*}  
  Then $(\T^m, \phi_a)$ is the maximal equicontinuous factor of $(\T^n, \phi_a\times\psi)$.
  
  A natural question now is whether the constraints on the maximal equicontinuous 
  factor listed in \cref{mthm} are sharp in general.
  The answer is negative: Consider 
  the wedge sum $K \defeq \bigvee_{i=1}^n\T$. Then $K$ is connected,
  locally connected, and $\ub_1(K) = n$. However, if 
  $k > 1$, there cannot be any monotone surjective map 
  $\rho \colon K \to \T^k$ since $\T^k$ is the disjoint union of uncountably
  many connected nonsingleton sets whereas $K$ is not. 
  
  In light of this example, one might look for other topological constraints on the maximal 
  equicontinuous factor and the covering dimension $\dim(K)$ of $K$ presents itself as 
  a possible candidate.
  Unfortunately, monotonicity by itself cannot yield such a bound: As observed
  by Hurewicz in \cite{Hurewicz1930}, every compact metric space embeds into a monotone image
  of $S^3$. In particular, $S^3$ has monotone quotients of infinite dimension. Therefore,
  one cannot, in general, conclude that if $p\colon K\to L$ is a monotone quotient map, that $\dim(L) \leq \dim(K)$.
  Positive results exist for factors of distal minimal flows for which this estimate does hold as shown in 
  \cite[Theorem 1.1]{Rees1977}. Without distality, results only exist in low dimensions:
  If $p\colon K\to L$ is a monotone quotient 
  map of compact spaces and $K$ is a two-manifold, then $\dim(L) \leq \dim(K)$, see \cite{Zemke1977}.
  In higher dimensions, though, one cannot hope for dimension inequalities for 
  factors without additional structural assumptions.
\end{remark}

\begin{remark}
  The commutativity of $G$ cannot be dropped in \cref{mthm} since any 
  compact Lie group acts equicontinuously on itself. However, as mentioned in the introduction,
  the maximal equicontinuous factor of a distal minimal flow on a compact manifold is 
  isomorphic to a compact abelian Lie group in the case of abelian $G$ and to a flow on a homogeneous 
  space of some compact Lie group if $G$ is nonabelian.
  One could therefore ask whether \cref{mthm}
  generalizes to nonabelian groups $G$ in an anlogous way.
  Unfortunately, the proof given above hinges on the fact that the dimension of a 
  compact abelian Lie group $H$ is precisely $\nicefrac{\ub_1(H)}{\ub_0(H)}$ and thus 
  encoded in the first two homology groups, which is false for general compact Lie groups.
\end{remark}

\end{document}